\newtheorem{obs} [subsection]{Remark}
\newtheorem{exm} [subsection]{Example}
\newtheorem{prop}[subsection]{Proposition}
\newtheorem{teor}[subsection]{Theorem}
\newtheorem{lema}[subsection]{Lemma}
\def\sdepth{\operatorname{sdepth}}
\def\depth{\operatorname{depth}}
\def\supp{\operatorname{supp}}
\def\deg{\operatorname{deg}}
\def\Ass{\operatorname{Ass}}
\begin{document}
\selectlanguage{english}
\frenchspacing
%\linenumbers

\numberwithin{equation}{section}

\title{Depth and Stanley depth of powers of the path ideal of a cycle graph. II}
\author{Silviu B\u al\u anescu$^1$ and Mircea Cimpoea\c s$^2$}
\date{}

\maketitle

\footnotetext[1]{ \emph{Silviu B\u al\u anescu}, University Politehnica of Bucharest, Faculty of
Applied Sciences, %Department of Mathematical Methods and Models, 
Bucharest, 060042, E-mail: silviu.balanescu@stud.fsa.upb.ro}
\footnotetext[2]{ \emph{Mircea Cimpoea\c s}, University Politehnica of Bucharest, Faculty of
Applied Sciences, %Department of Mathematical Methods and Models, 
Bucharest, 060042, Romania and Simion Stoilow Institute of Mathematics, Research unit 5, P.O.Box 1-764,
Bucharest 014700, Romania, E-mail: mircea.cimpoeas@upb.ro,\;mircea.cimpoeas@imar.ro}

\begin{abstract}
Let $J_{n,m}:=(x_1x_2\cdots x_m,\;  x_2x_3\cdots x_{m+1},\; \ldots,\; x_{n-m+1}\cdots x_n,\; x_{n-m+2}\cdots x_nx_1, \linebreak
\ldots, x_nx_1\cdots x_{m-1})$ be the $m$-path ideal of the cycle graph of length $n$, in the ring of polynomials $S=K[x_1,\ldots,x_n]$.
As a continuation of our previous paper \cite{lucrare2}, 
we prove several new results regarding $\depth(S/J_{n,m}^t)$ and $\sdepth(S/J_{n,m}^t)$, 
where $t\geq 1$.

\noindent \textbf{Keywords:} Stanley depth, depth, monomial ideal, cycle graph.

\noindent \textbf{2020 Mathematics Subject Classification:} 13C15, 13P10, 13F20.
\end{abstract}

\section*{Introduction}
%\linenumbers

Let $n\geq 1$ be an integer. We denote $[n]=\{1,2,\ldots,n\}$. 
Let $K$ be a field and $S=K[x_1,\ldots,x_n]$ the polynomial ring in $n$ variables over $K$.
Given a simple graph $G=(V,E)$, on the vertex set $V=[n]$ with the edge set $E$, the edge ideal associated to $G$ is
$$I(G)=(x_ix_j\;:\;\{i,j\}\in E)\subset S.$$
Note that $I(G)$ is a monomial ideal generated in degree $2$. The study of the algebraic properties of $I(G)$ is a 
well established topic in combinatorial commutative algebra. Conca and De Negri generalized the definition of an edge ideal 
and first introduced the notion of a $m$-path ideal in \cite{conca}, that is
$$I_m(G)=(x_{i_1}x_{i_2}\cdots x_{i_m}\;:\;i_1i_2\cdots i_m\text{ is a path in }G)\subset S.$$
In the recent years, several algebraic and combinatorial properties of path ideals have been studied. However, the study
of powers path ideals is in its infancy. 

The path graph of length $n-1$ is $P_n=(V(P_n),E(P_n))$, where 
$$V(P_n)=[n]\text{ and }E(P_n)=\{\{1,2\},\{2,3\},\ldots,\{n-1,n\}\}.$$
Let $1\leq m\leq n$. One can easily check that the \emph{$m$-path ideal of the path graph} of length $n$ is
$$I_{n,m}=(x_1x_2\cdots x_m,\;  x_2x_3\cdots x_{m+1},\; \ldots,\; x_{n-m+1}\cdots x_n)\subset S.$$
The cycle graph of length $n$ is $C_n=(V(C_n),E(C_n))$, where 
$$V(C_n)=[n]\text{ and }E(C_n)=E(P_n)\cup\{\{1,n\}\}.$$
Also, for $2\leq m < n$, the \emph{$m$-path ideal of the cycle graph} of length $n$ is
$$J_{n,m}=I_{n,m}+(x_{n-m+2}\cdots x_nx_1,x_{n-m+3}\cdots x_nx_1x_2,\ldots,x_nx_1\cdots x_{m-1}).$$

In \cite{lucrare1} we studied the depth and Stanley depth of $S/I_{n,m}^t$, where $t\geq 1$. 
Also, in \cite{lucrare2} we studied the depth and Stanley depth of $S/J_{n,m}^t$, where $t\geq 1$.
Following \cite{lucrare2}, the aim of our paper is to further investigate 
the depth and Stanley depth of the quotient rings associated 
to powers of the $m$-path ideal of a cycle.

In Theorem \ref{t1}, we reprove and also extend some results from \cite{lucrare2}. As the new results,
we show that for any $n\geq 5$, we have:
\begin{align*}
& \sdepth(S/J_{n,n-2}^t),\depth(S/J_{n,n-2}^t)>0\text{ if }n\text{ is odd, and }t < \frac{n-1}{2},\\
& \sdepth(S/J_{n,n-2}^t), \depth(S/J_{n,n-2}^t)>0\text{ if }n\text{ is even, and }t \geq 1,\\
& \frac{n}{2}\geq \sdepth(S/J_{n,n-2}^t)\geq \depth(S/J_{n,n-2}^t)=1\text{ if }n\text{ is even, and }t \geq n-1.
\end{align*}
In Theorem \ref{t3}, we show that if $n\geq 2m+1$ then 
$$\depth(S/J_{n,m}^t) \leq \depth(S/I_{n,m}^t),$$
which improves the upper bound for $\depth(S/J_{n,m}^t)$ given in \cite[Theorem 2.10]{lucrare2}.

Finally, in Section \ref{s3} we make some small steps in tackling the problem of computing $\depth(S/J_{n,m}^t)$
for any $t\geq 1$, see Proposition \ref{obsy} and Proposition \ref{obsy2}. Also, 
 we illustrate in two examples the difficulties which appear, see Example \ref{exem1} and Example \ref{exem2}.

% We denote $\me=(x_1,\ldots,x_n)$ the maximal graded ideal of $S$.

%Stanley depth is an important combinatorial invariant and we believe that it deserves a thorough study. The explicit computation of the Stanley depth it is a difficult task, even in very simple cases, like the maximal monomial ideal $\mathfrak m=(x_1,\ldots,x_n)$ of $S$.
%Therefore, although the Stanley conjecture was disproved in the most general set up, it is interesting to find large classes of ideals which satisfy the Stanley inequality. 
%Also, we note that, in the case of monomial ideals, Stanley's conjecture, i.e. $$\sdepth(I)\geq \depth(I),$$
%remains open.

\section{Preliminaries}\label{s2}

First, we recall the well known Depth Lemma, see for instance \cite[Lemma 2.3.9]{real}. % or \cite[Lemma 3.1.4]{vasc}.

\begin{lema}\label{l11}(Depth Lemma)
If $0 \rightarrow U \rightarrow M \rightarrow N \rightarrow 0$ is a short exact sequence of modules over a local ring $S$, or a Noetherian graded ring with $S_0$ local, then
\begin{enumerate}
\item[(1)] $\depth M \geq \min\{\depth N,\depth U\}$.
\item[(2)] $\depth U \geq \min\{\depth M,\depth N +1\}$.
\item[(3)] $\depth N \geq \min\{\depth U-1,\depth M\}$.
\end{enumerate}
\end{lema}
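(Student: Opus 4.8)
The plan is to deduce all three inequalities from a single homological input: the characterization of depth in terms of the vanishing of Ext modules against the residue field. Writing $k=S/\me$ for the residue field (in the graded case $k=S/S_+$), recall that for a finitely generated module $M$ one has
$$\depth M=\min\{\,i\ :\ \operatorname{Ext}^i_S(k,M)\neq 0\,\},$$
with the convention that the minimum over the empty set is $+\infty$, so that $\depth 0=+\infty$. This reduces each of the three claims to elementary bookkeeping on one long exact sequence, so no separate argument per item is really needed.

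First I would apply the functor $\operatorname{Ext}^\bullet_S(k,-)$ to the short exact sequence $0\to U\to M\to N\to 0$, obtaining the long exact sequence
$$\cdots\to\operatorname{Ext}^{i}_S(k,U)\to\operatorname{Ext}^{i}_S(k,M)\to\operatorname{Ext}^{i}_S(k,N)\to\operatorname{Ext}^{i+1}_S(k,U)\to\cdots.$$
Set $a=\depth U$, $b=\depth M$, $c=\depth N$. For part (1), I would fix any $i<\min\{a,c\}$; then the two outer terms $\operatorname{Ext}^i(k,U)$ and $\operatorname{Ext}^i(k,N)$ vanish by definition of $a$ and $c$, so exactness forces $\operatorname{Ext}^i(k,M)=0$, giving $b\geq\min\{a,c\}$. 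For part (3), fixing $i<\min\{a-1,b\}$ makes $\operatorname{Ext}^i(k,M)=0$ and $\operatorname{Ext}^{i+1}(k,U)=0$, whence $\operatorname{Ext}^i(k,N)=0$ and $c\geq\min\{a-1,b\}$. For part (2), fixing $i<\min\{b,c+1\}$ makes $\operatorname{Ext}^i(k,M)=0$ and $\operatorname{Ext}^{i-1}(k,N)=0$, so the segment $\operatorname{Ext}^{i-1}(k,N)\to\operatorname{Ext}^i(k,U)\to\operatorname{Ext}^i(k,M)$ yields $\operatorname{Ext}^i(k,U)=0$ and hence $a\geq\min\{b,c+1\}$. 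Each inequality is thus just the contrapositive reading of three consecutive arrows in the long exact sequence.

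The only genuinely delicate point — and the step I would treat most carefully — is the bookkeeping at the boundary of the indexing range, so I would fix the standing conventions before running the case analysis: $\operatorname{Ext}^i_S(k,-)=0$ for $i<0$, and $\depth 0=+\infty$, so that the inequalities remain valid when one of the modules is zero or when one of the depths equals $0$. For instance, in part (2) one must verify that the use of $\operatorname{Ext}^{i-1}(k,N)=0$ is vacuously fine when $i=0$. I would alternatively remark that the identical argument goes through verbatim with local cohomology $H^i_{\me}(-)$ in place of $\operatorname{Ext}^i_S(k,-)$, using $\depth M=\min\{i:H^i_{\me}(M)\neq 0\}$; this variant avoids the finiteness hypothesis and is arguably the more natural framing in the $\mathbb Z^n$-graded setting used throughout the paper.
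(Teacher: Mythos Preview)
Your argument is correct and is precisely the standard proof: characterize depth via the first nonvanishing $\operatorname{Ext}^i_S(k,-)$ (or $H^i_{\mathfrak m}(-)$), run the long exact sequence, and read off each inequality from three consecutive terms. Your care with the edge cases ($i=0$ in part (2), the convention $\depth 0=+\infty$) is appropriate.

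There is nothing to compare against, however: the paper does not prove this lemma at all. It merely records it as the well-known Depth Lemma and cites \cite[Lemma~2.3.9]{real} for a proof. So your write-up is not an alternative approach but rather a self-contained proof where the paper gives only a reference.
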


The following result, which will be used later on, is an easy application of the Depth Lemma:

\begin{lema}\label{liema}
Let $d\geq 1$ and $Z_1\cup Z_2\cup \cdots \cup Z_d=\{x_1,\ldots,x_n\}$ be a partition, i.e. $|Z_i|>0$ and $Z_i\cap Z_j=\emptyset$ for all $i\neq j$.
Let $P_i=(Z_i)\subset S$ for $1\leq i\leq d$ and $U:=P_1\cap \cdots \cap P_d$. Then $\depth(S/U)=d-1$.
\end{lema}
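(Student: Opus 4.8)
The plan is to argue by induction on $d$. For the base case $d=1$ we have $Z_1=\{x_1,\ldots,x_n\}$, so $U=(x_1,\ldots,x_n)=\me$ and $S/U=K$, which has depth $0=d-1$.

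For the inductive step, the idea is to peel off the last prime. Write $A:=P_1\cap\cdots\cap P_{d-1}$ and $B:=P_d=(Z_d)$, so that $U=A\cap B$. I would then use the Mayer--Vietoris short exact sequence
$$0\to S/(A\cap B)\to S/A\oplus S/B\to S/(A+B)\to 0$$
and compute the depth of each of the three outer modules. Since $B=(Z_d)$, the quotient $S/B$ is a polynomial ring in the $n-|Z_d|$ remaining variables, so $\depth(S/B)=n-|Z_d|$. Setting $W:=Z_1\cup\cdots\cup Z_{d-1}$, the ideal $A$ involves only the variables in $W$; hence $S/A$ is obtained from $K[W]/A'$ (where $A'$ is the analogous intersection inside $K[W]$) by adjoining the $|Z_d|$ variables of $Z_d$ as polynomial variables. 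By the inductive hypothesis applied to the partition $Z_1,\ldots,Z_{d-1}$ of $W$ we get $\depth(K[W]/A')=d-2$, and adjoining polynomial variables raises depth by their number, so $\depth(S/A)=d-2+|Z_d|$. Finally, reducing modulo $(Z_d)$ gives $S/(A+B)\cong K[W]/A'$, whence $\depth(S/(A+B))=d-2$.

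The last step is to feed these values into the Depth Lemma (Lemma \ref{l11}). From part (2) applied to the sequence above, $\depth(S/U)\geq\min\{\min(d-2+|Z_d|,\,n-|Z_d|),\,d-1\}$. The key observation is that both $d-2+|Z_d|\geq d-1$ (because $|Z_d|\geq 1$) and $n-|Z_d|=|W|\geq d-1$ (because $W$ is a union of the $d-1$ nonempty sets $Z_1,\ldots,Z_{d-1}$), so the right-hand side equals $d-1$ and we obtain $\depth(S/U)\geq d-1$. For the reverse inequality I would use part (3): $d-2=\depth(S/(A+B))\geq\min\{\depth(S/U)-1,\,\min(d-2+|Z_d|,n-|Z_d|)\}$; since the second term of this minimum is $\geq d-1>d-2$, the minimum must be realized by the first term, forcing $\depth(S/U)\leq d-1$. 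Combining the two bounds gives $\depth(S/U)=d-1$.

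The routine parts are the identification of the three quotient modules and the fact that polynomial extension adds to depth. The step that requires the most care is the bookkeeping in the final paragraph: one must check that the two depths appearing in $\depth(S/A)$ and $\depth(S/B)$ are both at least $d-1$, since this is exactly what makes the Depth Lemma pin down the value of $\depth(S/U)$ precisely rather than merely bounding it from one side.
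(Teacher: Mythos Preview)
Your proof is correct. The paper does not give a detailed proof of this lemma, only remarking that it is ``an easy application of the Depth Lemma''; your induction via the Mayer--Vietoris sequence $0\to S/(A\cap B)\to S/A\oplus S/B\to S/(A+B)\to 0$ together with Lemma~\ref{l11} is exactly a natural way to flesh this out, and all the depth computations and inequalities you record are accurate.
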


Now, we briefly recall the definition of the Stanley depth invariant, for a quotient of monomial ideals.%; see \cite{her} for further details.

Let $0\subset I\subsetneq J\subset S$ be two monomial ideals and $M=J/I$. A \emph{Stanley decomposition} of $M$ is the decomposition of $M$ as a direct sum 
$\mathcal D: M = \bigoplus_{i=1}^r m_i K[Z_i]$ of $\mathbb Z^n$-graded $K$-vector spaces, where $m_i\in S$ are monomials and $Z_i\subset\{x_1,\ldots,x_n\}$.
We define $\sdepth(\mathcal D)=\min_{i=1,\ldots,r} |Z_i|$ and $\sdepth(M)=\max\{\sdepth(\mathcal D)|\;\mathcal D$ is a Stanley decomposition of $M\}$. The number 
$\sdepth(M)$ is called the \emph{Stanley depth} of $M$. 

Herzog, Vladoiu and Zheng show in \cite{hvz} that $\sdepth(M)$ can be computed in a finite number of steps.
We say that $M$ satisfies the Stanley inequality, if 
$$\sdepth(M)\geq \depth(M).$$
Stanley \cite{stan} conjectured that any quotient of monomial ideals $M=J/I$ satisfies the Stanley inequality, a conjecture which proves to be false
in general for $M=J/I$, where $I\neq 0$; see Duval et al. \cite{duval}.

The explicit computation of the Stanley depth it is a difficult task, both from a theoretical and practical point of vue.
Also, although the Stanley conjecture was disproved in the most general set up, 
it is interesting to find large classes of quotients of monomial ideals which satisfy the Stanley inequality. 

In \cite{asia}, Asia Rauf proved the analog of Lemma \ref{l11} for $\sdepth$:

\begin{lema}\label{asia}
If $0 \rightarrow U \rightarrow M \rightarrow N \rightarrow 0$ is a short exact sequence of $\mathbb Z^n$-graded $S$-modules, then
$$ \sdepth(M) \geq \min\{\sdepth(U),\sdepth(N) \}.$$
\end{lema}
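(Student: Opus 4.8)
The plan is to build an explicit Stanley decomposition of $M$ out of optimal Stanley decompositions of the two outer terms and to show that its Stanley depth is at least $\min\{\sdepth(U),\sdepth(N)\}$. Concretely, I would fix a Stanley decomposition $\mathcal D_U\colon U=\bigoplus_{i=1}^r u_iK[Z_i]$ with $\sdepth(\mathcal D_U)=\sdepth(U)$ and a Stanley decomposition $\mathcal D_N\colon N=\bigoplus_{j=1}^s v_jK[W_j]$ with $\sdepth(\mathcal D_N)=\sdepth(N)$. Identifying $U$ with its image, I may regard $U$ as a $\mathbb Z^n$-graded submodule of $M$ with $N=M/U$ and projection $\pi\colon M\to N$. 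The aim is to prove that, for a suitable choice of homogeneous lifts $\widetilde v_j\in M$ of the $v_j$, the equality $M=\bigoplus_{i=1}^r u_iK[Z_i]\ \oplus\ \bigoplus_{j=1}^s \widetilde v_jK[W_j]$ holds as a decomposition of $\mathbb Z^n$-graded $K$-vector spaces, with each summand free over the corresponding $K[Z_i]$ or $K[W_j]$.

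The first step is the lifting, and this is where the graded hypothesis does the essential work. Because $\pi$ is $\mathbb Z^n$-graded, it is surjective in every multidegree $a\in\mathbb Z^n$; hence each homogeneous generator $v_j$ admits a homogeneous preimage $\widetilde v_j\in M$ with $\deg\widetilde v_j=\deg v_j$. For a monomial $w\in K[W_j]$ the module action gives $\widetilde v_j\,w\in M$, and by $S$-linearity $\pi(\widetilde v_j\,w)=v_j\,w$. I would then take the candidate decomposition displayed above and verify the two defining properties. For spanning, given a homogeneous $m\in M$, its image $\pi(m)\in N$ lies in a unique $v_jK[W_j]$, so $\pi(m)=\sum_j v_j f_j$ with $f_j\in K[W_j]$; consequently $m-\sum_j \widetilde v_j f_j\in\ker\pi=U$, and decomposing this element in $\mathcal D_U$ expresses $m$ in the candidate.

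For directness (which will simultaneously yield freeness of the lifted pieces), suppose $\sum_i u_i g_i+\sum_j \widetilde v_j f_j=0$ in $M$ with $g_i\in K[Z_i]$ and $f_j\in K[W_j]$. Applying $\pi$ kills all the $u_ig_i$, since $U=\ker\pi$, leaving $\sum_j v_j f_j=0$ in $N$; directness of $\mathcal D_N$ forces every $f_j=0$. The remaining relation $\sum_i u_i g_i=0$ then forces every $g_i=0$ by directness of $\mathcal D_U$. Taking all $g_i=0$ and a single index $j$ in this argument also shows that each $\widetilde v_jK[W_j]$ is free over $K[W_j]$. Thus the constructed $\mathcal D_M$ is a genuine Stanley decomposition with $\sdepth(\mathcal D_M)=\min\{\min_i|Z_i|,\ \min_j|W_j|\}=\min\{\sdepth(\mathcal D_U),\sdepth(\mathcal D_N)\}=\min\{\sdepth(U),\sdepth(N)\}$, and since $\sdepth(M)\geq\sdepth(\mathcal D_M)$ the claim follows.

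The step I expect to be the main obstacle is precisely the legitimacy of the lifting together with the freeness of the lifted summands: one must be sure that a single homogeneous choice $\widetilde v_j$ makes $\widetilde v_jK[W_j]$ a free $K[W_j]$-module inside $M$, rather than merely a $K$-spanning set. The argument above handles this, but it is worth emphasizing that graded surjectivity of $\pi$ is exactly what legitimizes the degree-preserving lift, and that the single directness computation does double duty. If one insists on staying within the $J/I$ framework used to define $\sdepth$ in the excerpt, the only additional care needed is to observe that the lifts $\widetilde v_j$ can be taken to be cosets of monomials, so that $\mathcal D_M$ is literally of the prescribed monomial form.
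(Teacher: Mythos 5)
Correct. The paper does not actually prove this lemma --- it is recalled verbatim from Rauf's paper \cite{asia} --- and your lifting argument is essentially the standard proof given there: choose Stanley decompositions of $U$ and $N$, lift the generators of the decomposition of $N$ through the graded surjection $\pi$, and verify spanning, directness and freeness exactly as you do (your only slip is the phrase that $\pi(m)$ ``lies in a unique $v_jK[W_j]$''; it decomposes uniquely as a sum over the $v_jK[W_j]$, which is what your displayed formula correctly uses).
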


%We recall the following well known result (see for instance \cite[Lemma 2.3.10]{real}):

%\begin{lema}\label{freg}
%Let $M$ be a graded $S$-module and $f\in\mathfrak m=(x_1,\ldots,x_n)\subset S$ a homogeneous polynomial such that $f$ is regular on $M$.
%Then $\depth(M/fM)=\depth(M)-1$.
%\end{lema}

We also recall the following well known results. See for instance \cite[Corollary 1.3]{asia}, \cite[Proposition 2.7]{mirci},
\cite[Theorem 1.1]{mir}, \cite[Lemma 3.6]{hvz} and \cite[Corollary 3.3]{asia}.

\begin{lema}\label{lem}
Let $I\subset S$ be a monomial ideal and let $u\in S$ a monomial such that $u\notin I$. Then
\begin{enumerate}
\item[(1)] $\sdepth(S/(I:u))\geq \sdepth(S/I)$.
% \item[(2)] $\sdepth(I:u)\geq \sdepth(I)$.
\item[(2)] $\depth(S/(I:u))\geq \depth(S/I)$.
\end{enumerate}
\end{lema}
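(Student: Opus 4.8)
The plan is to treat both parts through the single observation that multiplication by $u$ is an injective $\mathbb Z^n$-graded homomorphism $\cdot u:S/(I:u)\to S/I$ of degree $\deg u$, whose image is the submodule $(I+uS)/I$; thus $S/(I:u)\cong (I+uS)/I$ as $\mathbb Z^n$-graded $S$-modules, up to a shift. Moreover, writing $u=x_iv$ one has $(I:u)=((I:v):x_i)$, so by induction on $\deg u$ it suffices to prove both inequalities when $u=x_i$ is a single variable; the general case then follows by applying the variable case to the ideal $(I:v)$ in place of $I$. (If $x_i\in I$ then $(I:x_i)=S$ and there is nothing to prove, so assume $x_i\notin I$.)

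For part (1) I would build a Stanley decomposition of $S/(I:x_i)$ directly from one of $S/I$; note that applying Lemma~\ref{asia} to the sequence below bounds $\sdepth(S/I)$ from below rather than $\sdepth(S/(I:x_i))$, so an explicit construction is needed. Fix $\mathcal D:S/I=\bigoplus_{j=1}^r m_jK[Z_j]$ with $\sdepth(\mathcal D)=\sdepth(S/I)$. Every monomial $w\notin I$ with $x_i\mid w$ lies in a unique summand $m_jK[Z_j]$, and $x_i\mid w=m_jt$ forces $x_i\mid m_j$ or $x_i\in Z_j$. If $x_i\mid m_j$, the whole space $m_jK[Z_j]$ consists of multiples of $x_i$ and its preimage under $\cdot x_i$ is $(m_j/x_i)K[Z_j]$; if instead $x_i\nmid m_j$ but $x_i\in Z_j$, the multiples of $x_i$ form $m_jx_iK[Z_j]$, whose preimage is $m_jK[Z_j]$; summands with $x_i\nmid m_j$ and $x_i\notin Z_j$ contribute no multiple of $x_i$ and are simply discarded. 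In each surviving case the preimage is a Stanley space on the \emph{same} set $Z_j$, hence of dimension $|Z_j|\ge\sdepth(S/I)$. Since $\cdot x_i$ is injective with image exactly $(I+x_iS)/I$, these preimages are pairwise disjoint and their direct sum is all of $S/(I:x_i)$, giving a Stanley decomposition of Stanley depth at least $\sdepth(S/I)$; this bookkeeping is the combinatorial content of \cite[Lemma 3.6]{hvz}.

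For part (2) the combinatorial restriction is unavailable and I would argue homologically. From $0\to S/(I:x_i)\xrightarrow{\cdot x_i}S/I\to S/(I,x_i)\to 0$ and part (2) of the Depth Lemma (Lemma~\ref{l11}) we obtain $\depth(S/(I:x_i))\ge\min\{\depth(S/I),\ \depth(S/(I,x_i))+1\}$, so the claim reduces to the single inequality $\depth(S/(I,x_i))\ge\depth(S/I)-1$. This is the main obstacle: the Depth Lemma by itself yields only a vacuous lower bound here, so the proof must exploit the monomial $\mathbb Z^n$-grading. I would establish it by decomposing $S/I$ according to $x_i$-degree over $R=K[x_1,\dots,\widehat{x_i},\dots,x_n]$: setting $I_k=(m\in R:\ x_i^km\in I)$ gives an increasing, eventually stationary chain $I_0\subseteq I_1\subseteq\cdots$ with $S/(I,x_i)\cong R/I_0$, together with a two-step filtration of $S/I$ whose stationary tail is $(R/I_\infty)[x_i]$ and whose initial part is annihilated by a power of $x_i$. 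Comparing the depths of the filtration pieces with $\depth_R(R/I_0)$ yields $\depth(S/(I,x_i))\ge\depth(S/I)-1$ and closes the argument; alternatively one may polarize $I$ and invoke Reisner's criterion. The careful depth bookkeeping in this decomposition is where the real work lies.
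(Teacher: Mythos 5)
A preliminary remark on the comparison: the paper never proves Lemma \ref{lem} at all; it quotes it as known, with pointers to \cite[Corollary 1.3]{asia}, \cite[Proposition 2.7]{mirci}, \cite[Theorem 1.1]{mir} and \cite[Lemma 3.6]{hvz}. So your attempt has to stand on its own. Part (1) does stand: the reduction to $u=x_i$ by induction on $\deg u$ is sound (from $u=x_iv\notin I$ you get $v\notin I$ and $x_i\notin(I:v)$, so both hypotheses persist), and transferring a Stanley decomposition of $S/I$ to one of $S/(I:x_i)$ by taking preimages under the injection $\cdot x_i$, keeping the same sets $Z_j$, is a complete and correct argument: your three cases are exhaustive, each surviving preimage is a Stanley space on the same $Z_j$, and these preimages partition the monomials outside $(I:x_i)$ because $\cdot x_i$ is a bijection between them and the monomials outside $I$ divisible by $x_i$. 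This is indeed the argument behind \cite[Lemma 3.6]{hvz}.

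Part (2), however, has a genuine gap. Your application of Lemma \ref{l11}(2) to $0\to S/(I:x_i)\to S/I\to S/(I,x_i)\to 0$ correctly reduces the claim to the inequality $\depth(S/(I,x_i))\geq\depth(S/I)-1$. That inequality is true, but note it is \emph{equivalent} to what you are proving: Lemma \ref{l11}(3) applied to the same sequence recovers it from $\depth(S/(I:x_i))\geq\depth(S/I)$, just as you go the other way. So the reduction is progress only if the new inequality gets an independent proof, and your sketch does not supply one. ``Comparing the depths of the filtration pieces'' of $0\to T\to S/I\to(R/I_\infty)[x_i]\to 0$ (with $T$ the $x_i$-torsion part) cannot succeed if the comparison is done with the Depth Lemma, because the Depth Lemma yields only min-type lower bounds and never an upper bound on $\depth(S/I)$ of the required strength. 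Concretely, in the case $I_1=I_\infty$, the assignment $\depth_R(R/I_0)=\depth_R(R/I_\infty)=2$, $\depth_R(I_\infty/I_0)=4$ (so $\depth_S T=4$ and $\depth_S\bigl((R/I_\infty)[x_i]\bigr)=3$), together with $\depth(S/I)=5$, satisfies every Depth Lemma inequality attached to your filtration and to $0\to I_\infty/I_0\to R/I_0\to R/I_\infty\to 0$, yet violates $\depth(S/I)\leq\depth_R(R/I_0)+1$. No actual ideal realizes these numbers, but that is exactly the point: ruling them out needs input finer than depth bookkeeping.

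What that finer input looks like: use the $\mathbb Z$-grading by $x_i$-degree, namely $H^j_{\mathfrak n}(S/I)\cong\bigoplus_{k\geq 0}H^j_{\mathfrak n}(R/I_k)\,x_i^k$ with $\mathfrak n=(x_1,\ldots,\widehat{x_i},\ldots,x_n)$, fed into the long exact sequence of local cohomology for $\mathfrak m=\mathfrak n+(x_i)$. Since $(S/I)_{x_i}\cong(R/I_\infty)[x_i,x_i^{-1}]$ has nonzero components in negative $x_i$-degrees while $H^j_{\mathfrak n}(S/I)$ lives in nonnegative degrees, at $j=\depth_R(R/I_0)$ either the comparison map has nonzero kernel or nonzero cokernel, forcing $H^j_{\mathfrak m}(S/I)\neq 0$ for some $j\leq\depth_R(R/I_0)+1$. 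Your fallback, ``polarize and invoke Reisner's criterion,'' is likewise only a pointer: Reisner's criterion detects Cohen--Macaulayness rather than depth, and the polarization of $(I,x_i)$ is not obtained from that of $I$ by simply adding $x_i$. The short honest route for part (2) is the one the paper itself takes: the statement is verbatim \cite[Corollary 1.3]{asia}, so either cite it or reproduce an argument of the above type.
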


\begin{lema}\label{lemm}
Let $I\subset S$ be a monomial ideal and let $u\in S$ a monomial such that $I=u(I:u)$. Then
  \begin{enumerate}
  \item[(1)] $\sdepth(S/(I:u))=\sdepth(S/I)$.
%	\item[(2)] $\sdepth(I:u)=\sdepth(I)$.'
	\item[(2)] $\depth(S/(I:u))=\depth(S/I)$.
  \end{enumerate}
\end{lema}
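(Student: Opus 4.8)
The plan is to deduce both equalities from a single short exact sequence extracted from the hypothesis. We may assume $I\neq 0$ and $u\notin I$ (so in particular $u\neq 1$): if $I=0$ or $u=1$ then $(I:u)=I$ and there is nothing to prove, while if $u\in I$ then the hypothesis forces $I=(u)$, so $(I:u)=S$ and $S/(I:u)=0$, making the statement vacuous.

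First I would translate the hypothesis. Since $u(I:u)$ consists exactly of the products $uh$ with $h\in(I:u)$, the equality $I=u(I:u)$ forces $I\subseteq(u)$; conversely $I\subseteq(u)$ gives $I=u(I:u)$, because any $f\in I$ can be written $f=ug$ with $g\in(I:u)$. Thus the hypothesis is precisely $I\subseteq(u)$. I then feed this into the standard colon sequence of $\mathbb{Z}^n$-graded modules
$$0\longrightarrow (S/(I:u))(-\deg u)\xrightarrow{\ \cdot u\ } S/I\longrightarrow S/((u)+I)\longrightarrow 0,$$
in which multiplication by $u$ is injective with cokernel $S/((u)+I)$; because $I\subseteq(u)$ we have $(u)+I=(u)$, so the right-hand term is simply $S/(u)$.

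The remaining inputs are the values for the hypersurface $S/(u)$ and a dimension bound. As $S/(u)$ is Cohen--Macaulay of dimension $n-1$ we have $\depth(S/(u))=n-1$, and $\sdepth(S/(u))=n-1$ as well (an explicit Stanley decomposition gives $\geq n-1$, and $\sdepth\leq\dim=n-1$). Moreover $S/I$ and $S/(I:u)$ are nonzero proper quotients, so both have dimension $\leq n-1$, whence their depths and Stanley depths are at most $n-1$. For part (2), write $a=\depth(S/(I:u))$ and $b=\depth(S/I)$. Lemma \ref{l11}(2) applied to the sequence gives $a\geq\min\{b,(n-1)+1\}=b$ since $b\leq n-1$, while Lemma \ref{l11}(1) gives $b\geq\min\{n-1,a\}=a$ since $a\leq n-1$; hence $a=b$. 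For part (1), Lemma \ref{lem}(1) already provides $\sdepth(S/(I:u))\geq\sdepth(S/I)$, so only the reverse inequality is needed; Rauf's Lemma \ref{asia} applied to the sequence yields $\sdepth(S/I)\geq\min\{\sdepth(S/(I:u)),\,n-1\}=\sdepth(S/(I:u))$, using $\sdepth(S/(I:u))\leq n-1$, and the two inequalities combine to equality.

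The genuinely mechanical steps are rewriting the hypothesis as $I\subseteq(u)$ and forming the colon sequence. The only point demanding care is that the minima appearing in the Depth Lemma and in Rauf's Lemma must collapse in the intended direction, which is exactly where the uniform bound $\leq n-1$ on the relevant depths and Stanley depths, and the value $n-1$ for $S/(u)$, are used. I expect the main (if modest) obstacle to lie in the Stanley depth half: since there is no long exact sequence for $\sdepth$, the reverse inequality cannot be read off symmetrically and must instead be assembled from the one-sided estimate of Lemma \ref{asia} together with Lemma \ref{lem}(1) and the dimension bound on $S/(I:u)$.
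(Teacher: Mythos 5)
Your argument is correct, but it differs from the paper in an important structural sense: the paper does not prove Lemma \ref{lemm} at all, it merely recalls it as a known result with citations (it is essentially \cite[Proposition 2.7]{mirci}), so your proposal is a genuine self-contained proof rather than a rederivation of an argument in the text. Your route is the natural one: the observation that $I=u(I:u)$ is equivalent to $I\subseteq(u)$, the short exact sequence $0\to (S/(I:u))(-\deg u)\xrightarrow{\cdot u} S/I\to S/(u)\to 0$, and then the Depth Lemma \ref{l11} for part (2) and Rauf's Lemma \ref{asia} together with Lemma \ref{lem}(1) for part (1), with the collapse of the minima guaranteed by $\depth,\sdepth\leq n-1$ for nonzero proper monomial quotients and by $\depth(S/(u))=\sdepth(S/(u))=n-1$. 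The cited original proof is, in substance, the same observation packaged slightly differently: since the monomials outside $I$ are exactly the monomials not divisible by $u$ together with the monomials $uv$ with $v\notin(I:u)$, one has a $\mathbb Z^n$-graded vector space decomposition $S/I\cong S/(u)\oplus u\cdot\bigl(S/(I:u)\bigr)$, from which Stanley decompositions transfer directly; your exact-sequence formulation buys uniformity (depth and sdepth are handled by the same sequence) at the cost of needing the dimension bounds to resolve the minima. Two small points worth keeping explicit in a final write-up: first, the identification $\sdepth\bigl((S/(I:u))(-\deg u)\bigr)=\sdepth(S/(I:u))$ should be justified by noting that the first term of your sequence is the quotient of monomial ideals $(u)/I$ and that multiplication by $u$ gives a bijection between Stanley decompositions of $S/(I:u)$ and of $(u)/I$; second, your handling of the degenerate case $u\in I$ (which forces $I=(u)$ and $S/(I:u)=0$) is the right reading, since the lemma, like Lemma \ref{lem}, is implicitly stated under the convention $u\notin I$.
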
	
	
%\begin{lema}\label{lemmm}	
%	Let $I\subset S$ be a monomial ideal and let $u\in S$ a monomial which is regular on $S/I$. Then:
%  \begin{enumerate}
%	\item $\sdepth(S/(I,u))=\sdepth(S/I)-1$.
%  \item $\depth(S/(I,u))=\depth(S/I)-1$.
%  \end{enumerate}
%\end{lema}
	
\begin{lema}\label{lhvz}
Let $I\subset S$ be a monomial ideal and $S'=S[x_{n+1}]$. Then
\begin{enumerate}
\item[(1)] $\sdepth_{S'}(S'/IS')=\sdepth_S(S/I)+1$, 
% \item[(2)] $\sdepth_{S'}(IS')=\sdepth_S(I)+1$.
\item[(2)] $\depth_{S'}(S'/IS')=\depth_S(S/I)+1$.
\end{enumerate}
\end{lema}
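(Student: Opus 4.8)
The plan is to treat both statements as instances of one geometric fact: passing from $S$ to $S'=S[x_{n+1}]$ adjoins a genuinely free variable that does not interact with the relations defining $I$, since every generator of $I$ (hence of $IS'$) involves only $x_1,\dots,x_n$. Concretely, I would first record the $\mathbb Z^{n+1}$-graded isomorphism $S'/IS'\cong (S/I)[x_{n+1}]=(S/I)\otimes_K K[x_{n+1}]$, which underlies both parts and makes the ``$+1$'' transparent.

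For the depth statement (2), I would use that $x_{n+1}$ is a nonzerodivisor on $S'/IS'$ lying in the graded maximal ideal $\mathfrak m'=(x_1,\dots,x_{n+1})$, together with the identification $S'/(IS'+(x_{n+1}))\cong S/I$. From the short exact sequence
$$0 \longrightarrow S'/IS' \xrightarrow{\ \cdot x_{n+1}\ } S'/IS' \longrightarrow S/I \longrightarrow 0$$
the Depth Lemma (Lemma \ref{l11}(3)) already yields $\depth_{S'}(S/I)\geq \depth_{S'}(S'/IS')-1$, while the opposite estimate comes from the standard fact that quotienting a module by a regular element in the maximal ideal drops depth by exactly one; together these give $\depth_{S'}(S'/IS')=\depth_{S'}(S/I)+1$. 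The one point needing care is the change of rings: since $x_{n+1}$ annihilates $S/I$ as an $S'$-module, every maximal $(S/I)$-regular sequence in $\mathfrak m'$ can be taken inside $(x_1,\dots,x_n)$, whence $\depth_{S'}(S/I)=\depth_S(S/I)$, and (2) follows.

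For the Stanley depth statement (1), the inequality $\sdepth_{S'}(S'/IS')\geq \sdepth_S(S/I)+1$ is the easy half: given a Stanley decomposition $S/I=\bigoplus_i m_iK[Z_i]$, replacing each $Z_i$ by $Z_i\cup\{x_{n+1}\}$ yields a valid Stanley decomposition of $S'/IS'$ in which every $|Z_i|$ has grown by one. The reverse inequality is the crux, and here I would invoke the poset method of Herzog--Vladoiu--Zheng \cite{hvz}. Choosing a componentwise bound with $x_{n+1}$-bound at least $1$, the characteristic poset of $S'/IS'$ is the product $P\times C$, where $P$ is the characteristic poset of $S/I$ and $C$ is the chain recording the exponent of $x_{n+1}$; because $C$ is totally ordered, every interval of $P\times C$ factors as $[u,v]\times[a,b]$. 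Given an optimal interval partition of $P\times C$, I would restrict to those intervals whose lower endpoint has $x_{n+1}$-exponent $0$, i.e. with $a=0$: slicing at $x_{n+1}$-degree $0$ shows these project onto an interval partition of $P$, and each projected interval loses at most the single variable $x_{n+1}$ from its associated set. Hence this partition of $P$ witnesses $\sdepth_S(S/I)\geq \sdepth_{S'}(S'/IS')-1$, completing (1).

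The main obstacle is precisely this reverse inequality for $\sdepth$: unlike depth, Stanley depth has no exact-sequence machinery forcing equality (Lemma \ref{asia} gives only an inequality), so one genuinely needs the combinatorial product structure of the characteristic poset and the observation that the degree-$0$ slice of an \emph{arbitrary} interval partition descends to a legitimate partition downstairs. In carrying this out I would verify carefully that the intervals meeting the level $a=0$ really do cover $P$ exactly once and that the bookkeeping of the top-coordinate condition $b=g_{n+1}$ indeed accounts for at most the one lost variable.
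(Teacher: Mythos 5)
Your proof is correct, but note that the paper itself offers no proof of Lemma \ref{lhvz}: it is recalled as a well-known result, with references including \cite[Lemma 3.6]{hvz} and \cite{asia}, so the comparison is really with the literature rather than with an in-paper argument. Your part (2) is the standard argument and is fine: $S'/IS'\cong (S/I)[x_{n+1}]$, the variable $x_{n+1}$ is a nonzerodivisor on this module, quotienting by a regular element in the graded maximal ideal drops depth by exactly one, and $\depth_{S'}(S/I)=\depth_S(S/I)$ because $x_{n+1}$ annihilates $S/I$. Your part (1) is also sound: the inequality $\sdepth_{S'}(S'/IS')\geq \sdepth_S(S/I)+1$ obtained by enlarging each $Z_i$ to $Z_i\cup\{x_{n+1}\}$ is exactly right, and your poset slicing for the reverse inequality works as described --- intervals of $P\times C$ do factor as products of intervals, the intervals whose lower endpoint has last coordinate $0$ tile $P\times\{0\}$ and hence project to an interval partition of $P$, and each projection decreases the count $\rho$ by at most one, namely by the possible contribution of the coordinate $n+1$. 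The one structural comment is that you route the hard inequality through the Herzog--Vladoiu--Zheng correspondence, which is heavier than necessary: the same slicing can be performed directly on an arbitrary Stanley decomposition $S'/IS'=\bigoplus_{i=1}^r m_iK[Z_i]$, whose $x_{n+1}$-degree-zero graded slice is $\bigoplus_{\deg_{x_{n+1}}(m_i)=0} m_iK[Z_i\setminus\{x_{n+1}\}]$; this is a Stanley decomposition of $S/I$ of Stanley depth at least $\min_i|Z_i|-1$, which gives $\sdepth_S(S/I)\geq \sdepth_{S'}(S'/IS')-1$ with no poset machinery, and it is essentially the argument in the cited sources. What your poset version buys is a combinatorial restatement of that slice inside the characteristic poset (legitimate, once the HVZ theorem is granted); what the direct version buys is self-containedness, since the HVZ characterization is itself a nontrivial theorem.
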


\begin{lema}\label{lem7}
Let $I\subset S$ be a monomial ideal. Then the following assertions are equivalent:
\begin{enumerate}
\item[(1)] $\mathfrak m:=(x_1,\ldots,x_n)\in \Ass(S/I)$.
\item[(2)] $\depth(S/I)=0$.
\item[(3)] $\sdepth(S/I)=0$.
\end{enumerate}
\end{lema}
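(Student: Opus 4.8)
The plan is to close the logical cycle by establishing $(1)\Leftrightarrow(2)$, then $(1)\Rightarrow(3)$, and finally $(3)\Rightarrow(1)$, exploiting throughout that $I$ is a \emph{monomial} ideal: every associated prime of $S/I$ is a monomial prime, and $P\in\Ass(S/I)$ if and only if $P=(I:u)$ for some monomial $u\notin I$. In particular, $\mathfrak m\in\Ass(S/I)$ is equivalent to the existence of a monomial $u\notin I$ with $x_ju\in I$ for every $j\in[n]$, a reformulation I would use repeatedly.

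For $(1)\Leftrightarrow(2)$ I would use only standard commutative algebra. The equality $\depth(S/I)=0$ means that $\mathfrak m$ is contained in the set of zerodivisors of $S/I$, which is $\bigcup_{P\in\Ass(S/I)}P$; by prime avoidance $\mathfrak m\subseteq P$ for some associated prime $P$, and maximality of $\mathfrak m$ forces $\mathfrak m=P\in\Ass(S/I)$. The converse is immediate, since $\mathfrak m\in\Ass(S/I)$ yields a nonzero element of $S/I$ killed by every variable, whence $\mathfrak m$ consists of zerodivisors and $\depth(S/I)=0$.

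For $(1)\Rightarrow(3)$ I would argue directly with Stanley decompositions. Pick a monomial $u\notin I$ with $(I:u)=\mathfrak m$, and let $\mathcal D:\,S/I=\bigoplus_{i=1}^r m_iK[Z_i]$ be an arbitrary Stanley decomposition. The nonzero class of $u$ lies in a unique summand, and by $\mathbb Z^n$-homogeneity $u=m_iw$ for some monomial $w\in K[Z_i]$. If some $x_k\in Z_i$, then $x_ku=m_i(x_kw)$ is again a basis monomial of $m_iK[Z_i]$, hence $x_ku\notin I$, contradicting $(I:u)=\mathfrak m$. Therefore $Z_i=\emptyset$, so $\sdepth(\mathcal D)=0$, and since $\mathcal D$ was arbitrary, $\sdepth(S/I)=0$. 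Combined with $(2)\Rightarrow(1)$ this already delivers $(2)\Rightarrow(3)$.

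The remaining implication $(3)\Rightarrow(1)$ is the one I expect to be the main obstacle. Arguing by contraposition, I would assume $\mathfrak m\notin\Ass(S/I)$ and try to exhibit a Stanley decomposition in which every $Z_i$ is nonempty. The difficulty is that the presence of a one-dimensional piece $m_iK$ in some \emph{particular} decomposition does not by itself force $(I:m_i)=\mathfrak m$, so one cannot simply read off an associated prime, and one must instead work with an optimal decomposition. Here I would invoke the Herzog--Vladoiu--Zheng machinery \cite{hvz}: $\sdepth(S/I)$ is computed from interval partitions of the characteristic poset $P^g_{S/I}$ of monomials not in $I$, and the hypothesis $\mathfrak m\notin\Ass(S/I)$ is exactly what permits every interval to be chosen with nonempty free-variable set, yielding $\sdepth(S/I)\geq 1$. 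This is precisely the content recalled in \cite[Lemma 3.6]{hvz} and \cite[Corollary 3.3]{asia}, on which I would rely to finish.
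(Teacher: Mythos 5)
The paper never actually proves Lemma \ref{lem7}: it is recalled as a well-known fact, covered only by the batch of citations preceding it (the relevant ones being \cite[Corollary 3.3]{asia} and \cite[Theorem 1.1]{mir}). So there is no in-paper argument to compare against, and your proposal should be judged against the literature it invokes; on that score it holds up and in fact supplies more detail than the paper does. Your proof of $(1)\Leftrightarrow(2)$ (zerodivisors, prime avoidance, maximality of $\mathfrak m$) is the standard one and is complete, and your direct argument for $(1)\Rightarrow(3)$ is correct: the class of a monomial $u$ with $(I:u)=\mathfrak m$ lies in a unique summand $m_iK[Z_i]$ with $u=m_iw$, and any $x_k\in Z_i$ would force $x_ku=m_i(x_kw)\notin I$, so that summand must have $Z_i=\emptyset$ and every Stanley decomposition has $\sdepth$ equal to $0$. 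What your route buys is a self-contained treatment of everything except one implication; what the paper's route buys is economy.

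Two caveats on the remaining implication $(3)\Rightarrow(1)$, which you correctly single out as the hard one. First, \cite[Lemma 3.6]{hvz} is misattributed: that lemma is the polynomial-extension statement recalled in this paper as Lemma \ref{lhvz}, and it says nothing about $\sdepth=0$; the citation that actually carries this implication is \cite[Corollary 3.3]{asia}, which is indeed among the sources the paper lists, so outsourcing the step to it is legitimate and mirrors exactly what the paper does. Second, your heuristic that $\mathfrak m\notin\Ass(S/I)$ ``is exactly what permits every interval to be chosen with nonempty free-variable set'' is an overclaim rather than a proof. What the hypothesis gives immediately is only pointwise information: every monomial $u\notin I$ has a free direction, i.e.\ some $x_i$ with $ux_i^N\notin I$ for all $N$ (otherwise a monomial multiple of $u$, maximal among those outside $I$, would have colon ideal $\mathfrak m$). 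Turning these escape directions into an honest interval partition of $P^g_{S/I}$ still requires work, though it can be done cleanly: free directions only shrink when passing to multiples, so assigning to each poset point $a$ its least free direction $i(a)$ makes $i(\cdot)$ constant along the ray $a,\,a+e_{i(a)},\,a+2e_{i(a)},\ldots$, and the maximal such runs are pairwise disjoint intervals whose top points $b$ satisfy $b_{i(a)}=g_{i(a)}$, whence $\sdepth(S/I)\geq 1$ by the Herzog--Vladoiu--Zheng theorem. With that construction spelled out (or simply with the corrected citation standing in its place), your proof is complete.
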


We will use later on the following result from \cite{iran}:

\begin{teor}\label{teo-iran}(see \cite[Theorem 2.11]{iran})

Let $1\leq p\leq n-1$, $S'=K[x_1,\ldots,x_p]$ and $S''=K[x_{p+1},\ldots,x_n]$. Let $I\subset S'$ be a monomial ideal
and let $L\subset S''$ be a complete intersection monomial ideal. We denote by $I+L$, the ideal $IS+LS$ of 
$S=S'\otimes_K S''=K[x_1,\ldots,x_n]$. Then, for all $t\geq 1$, we have that
\begin{enumerate}
\item[(1)] $\depth(S/(I+L)^t)=\min\limits_{1\leq i\leq t}\{ \depth_{S'}(S'/I^i) \}+\dim(S''/L)$.
\item[(2)] $p+\dim(S''/L)\geq \sdepth(S/(I+L)^t)\geq \min\limits_{1\leq i\leq t}\{ \sdepth_{S'}(S'/I^i)\}+\dim(S''/L)$.
\end{enumerate}
\end{teor}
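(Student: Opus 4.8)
My plan is to study $M:=S/(I+L)^t$ through a filtration coming from the powers of $L$, exploiting that the variable sets of $S'$ and $S''$ are disjoint, so that $(I+L)^t=\sum_{i=0}^{t}I^iL^{t-i}$ as ideals of $S$ (with the convention $I^0=L^0=S$). First I would introduce
$$ 0=F_t\subseteq F_{t-1}\subseteq\cdots\subseteq F_1\subseteq F_0=M,\qquad F_b:=\frac{(I+L)^t+L^b}{(I+L)^t}, $$
and compute its factors. A direct monomial computation, using that a monomial lies in a sum of monomial ideals iff it lies in one of them, and that for $b'<b$ one has $L^b\cap I^{t-b'}L^{b'}=I^{t-b'}L^b$ because the $S'$- and $S''$-variables are disjoint, should give
$$ F_b/F_{b+1}\;\cong\;\frac{L^b}{L^{b+1}+I^{t-b}L^b}\;\cong\;(S'/I^{t-b})\otimes_K(L^b/L^{b+1}),\qquad 0\le b\le t-1, $$
where the last isomorphism is the standard identity $(A\otimes B)/(A'\otimes B+A\otimes B')\cong(A/A')\otimes(B/B')$ applied to $A=S'$, $A'=I^{t-b}$, $B=L^b$, $B'=L^{b+1}$.

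Next I would bring in the complete-intersection hypothesis. Since $L$ is generated by a regular sequence of (pairwise coprime) monomials, the associated graded ring $\operatorname{gr}_L(S'')\cong(S''/L)[T_1,\dots,T_r]$ is a polynomial ring over $S''/L$; hence each $L^b/L^{b+1}$ is a nonzero free $S''/L$-module, so $\depth_{S''}(L^b/L^{b+1})=\dim(S''/L)=:c$ and, analogously, $\sdepth_{S''}(L^b/L^{b+1})=c$. Combining this with the additivity of depth under tensor products over $K$ (classical) and with the superadditivity $\sdepth(A\otimes_K B)\ge\sdepth A+\sdepth B$ (immediate, by gluing optimal Stanley decompositions of $A$ and $B$ along the disjoint variable sets), each factor satisfies $\depth(F_b/F_{b+1})=\depth_{S'}(S'/I^{t-b})+c$ and $\sdepth(F_b/F_{b+1})\ge\sdepth_{S'}(S'/I^{t-b})+c$. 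Iterating the Depth Lemma (Lemma~\ref{l11}(1)) along the short exact sequences $0\to F_{b+1}\to F_b\to F_b/F_{b+1}\to 0$, and correspondingly Rauf's Lemma (Lemma~\ref{asia}), then yields the two lower bounds
$$ \depth M\ge\min_{1\le i\le t}\depth_{S'}(S'/I^i)+c,\qquad \sdepth M\ge\min_{1\le i\le t}\sdepth_{S'}(S'/I^i)+c, $$
because as $b$ runs over $0,\dots,t-1$ the index $t-b$ runs over $1,\dots,t$, and every factor is nonzero thanks to $\operatorname{gr}_L(S'')$ being a polynomial ring.

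The upper bound $\sdepth M\le p+c$ in part (2) is then immediate: $\sdepth M\le\dim M=\dim(S'/I)+\dim(S''/L)\le p+c$, using the standard inequality $\sdepth\le\dim$. What remains, and what I expect to be the main obstacle, is the matching upper bound $\depth M\le\min_{1\le i\le t}\depth_{S'}(S'/I^i)+c$ for part (1): this does not follow formally from the Depth Lemma, whose inequalities can be strict, so one must show that the smallest factor genuinely contributes to $\depth M$. To force equality I would argue by induction on $t$ (and, when the generators of $L$ are not variables, reduce to that case), using the short exact sequence
$$ 0\to S/(I+L)^{t-1}\xrightarrow{\;\cdot g_1\;}S/(I+L)^t\to S/\bigl((I+L)^t+(g_1)\bigr)\to 0, $$
where $g_1$ is one generator of $L$; here the colon computation $(I+L)^t:g_1=(I+L)^{t-1}$ (valid since $L^s:g_1=L^{s-1}$ for a monomial complete intersection) identifies the kernel, while $(I+L)^t+(g_1)=(I+(g_2,\dots,g_r))^t+(g_1)$ shrinks $L$. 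The delicate point is to control the connecting homomorphisms in local cohomology along this sequence, equivalently the behaviour of the relevant $\operatorname{Tor}$-modules under the Künneth isomorphism coming from $S=S'\otimes_K S''$, so that no cancellation occurs in the critical cohomological degree; it is precisely the rigidity of the complete intersection $L$, namely that $\depth(S''/L^j)=c$ for every $j$, that should make this tracking possible. Alternatively, the upper bound can be obtained by specialising the general depth formula for powers of sums of ideals in disjoint variables to the case where one summand is a complete intersection.
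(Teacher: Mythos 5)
This statement is not proved in the paper at all: it is imported verbatim as Theorem \ref{teo-iran} from \cite[Theorem 2.11]{iran} and used as a black box, so your attempt can only be judged on its own merits, not against an internal argument.

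What you do prove is correct. The filtration $F_b=((I+L)^t+L^b)/(I+L)^t$ and the identification of its factors is right: for monomial ideals in disjoint variable sets one indeed has $L^b\cap\bigl(L^{b+1}+(I+L)^t\bigr)=L^{b+1}+I^{t-b}L^b$, hence $F_b/F_{b+1}\cong (S'/I^{t-b})\otimes_K(L^b/L^{b+1})$; the freeness of $L^b/L^{b+1}$ over $S''/L$ is the standard associated-graded fact for regular sequences; and iterating Lemma \ref{l11}(1) and Lemma \ref{asia} along the filtration yields exactly the two lower bounds, while $\sdepth\leq\dim$ settles the upper bound in (2). (Two small debts: $\sdepth_{S''}(L^b/L^{b+1})\geq\dim(S''/L)$ is not ``analogous'' to the depth statement -- it needs the direct-sum decomposition of $L^b/L^{b+1}$ into shifted copies of $S''/L$ together with the known fact that monomial complete intersections satisfy the Stanley inequality; and the additivity of depth under $\otimes_K$ deserves a citation.)

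The genuine gap is the equality in (1), that is, the upper bound $\depth(S/(I+L)^t)\leq\min_{1\leq i\leq t}\depth_{S'}(S'/I^i)+\dim(S''/L)$, and neither of your two exit routes closes it. Your colon identity $(I+L)^t:g_1=(I+L)^{t-1}$ is correct, and more generally $(I+L)^t:g_1^{t-i}=(I+L)^i$, which with Lemma \ref{lem}(2) reduces everything to showing $\depth(S/(I+L)^i)\leq\depth(S'/I^i)+\dim(S''/L)$ for a single $i$; pushing the colon further, for any monomial $w\in I^{i-1}\setminus I^i$ one gets $(I+L)^i:w=(I^i:w)+L$, hence $\depth(S/(I+L)^t)\leq\depth(S'/(I^i:w))+\dim(S''/L)$. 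But colons can only increase depth, so this finishes only if some $w\in I^{i-1}\setminus I^i$ satisfies $\depth(S'/(I^i:w))=\depth(S'/I^i)$ -- a nontrivial statement about $\Ass(I^{i-1}/I^i)$ versus $\Ass(S'/I^i)$ (equivalently, about $I^{i-1}/I^i$ detecting the depth of $S'/I^i$) that you neither state nor prove. Your local cohomology sketch has the same hole: in the long exact sequence attached to $0\to S/(I+L)^{t-1}\to S/(I+L)^t\to S/((I+L)^t+(g_1))\to 0$, the dangerous case is precisely when the depths of the sub and the quotient are adjacent, where the connecting map $H^{j}_{\mathfrak m}(N)\to H^{j+1}_{\mathfrak m}(U)$ can be injective and cancel the critical class; ``rigidity of the complete intersection'' names the phenomenon you need but is not an argument for it. Finally, the fallback of ``specialising the general depth formula for powers of sums of ideals'' is not available: for arbitrary ideals in disjoint variables only one-sided bounds of this type are known in general, and the exact formula when one summand is a complete intersection is essentially the content of \cite[Theorem 2.11]{iran} itself, so this is circular. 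In summary, part (2) and the inequality $\geq$ in part (1) are established, but the substantive assertion -- the equality in (1) -- remains unproved.
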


Let $1\leq m \leq n$ be two integers. The $m$-path ideal of the path graph $P_n$ is
$$I_{n,m}=(x_1\cdots x_m,\; x_2\cdots x_{m+1},\; \ldots,\; x_{n-m+1}\cdots x_n)\subset S.$$
We denote
$$\varphi(n,m,t):= \begin{cases} n -t+2 -
 \left\lfloor \frac{n-t+2}{m+1} \right\rfloor - \left\lceil \frac{n-t+2}{m+1} \right\rceil, & t \leq n+1-m
 \\ m-1,& t > n+1-m \end{cases}.$$
We recall the main result of \cite{lucrare1}:

\begin{teor}(See \cite[Theorem 2.6]{lucrare1})\label{depth}
With the above notations, we have that
\begin{enumerate}
\item[(1)] $\sdepth(S/I_{n,m}^t)\geq \depth(S/I_{n,m}^t)=\varphi(n,m,t),\text{ for all }t\geq 1$.
\item[(2)] $\sdepth(S/I_{n,m}^t)\leq \sdepth(S/I_{n,m})=\varphi(n,m,1)$.
\end{enumerate}
\end{teor}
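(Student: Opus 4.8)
The plan is to prove both parts by a double induction on $n$ and $t$, organized around the arithmetic identity $\varphi(n,m,t)=\varphi(n-t+1,m,1)$, valid whenever $t\le n+1-m$. This identity suggests that the whole statement should reduce to the relation $\depth(S/I_{n,m}^t)=\depth(S/I_{n-1,m}^{t-1})$ for $t\ge 2$, together with the two base cases $t=1$ and $n=m$. The case $n=m$ is immediate: $I_{m,m}^t=(x_1\cdots x_m)^t$ is principal, so $S/I_{m,m}^t$ is Cohen--Macaulay of dimension $m-1$ and both invariants equal $m-1$, matching $\varphi$ in its stable range $t>n+1-m$. The case $t=1$ is the computation of $\depth(S/I_{n,m})$ and $\sdepth(S/I_{n,m})$, which I would treat separately by its own induction on $n$, since it is the genuine ``seed'' of the recursion.

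For the inductive step I would split off the last variable $x_n$, using the short exact sequence
\[
0\longrightarrow S/(I_{n,m}^t:x_n)\xrightarrow{\;\cdot x_n\;} S/I_{n,m}^t\longrightarrow S/(I_{n,m}^t+(x_n))\longrightarrow 0.
\]
The cokernel is easy: since $x_n$ divides only the last generator $x_{n-m+1}\cdots x_n$, setting $x_n=0$ kills exactly the products of generators that use it, so $S/(I_{n,m}^t+(x_n))\cong K[x_1,\dots,x_{n-1}]/I_{n-1,m}^t$ as an $S$-module, and its depth is $\varphi(n-1,m,t)$ by the inductive hypothesis. The real work is the colon ideal $(I_{n,m}^t:x_n)$: one must show that, after peeling off $x_n$ as a free variable via Lemma \ref{lhvz} and recognizing the ``used up'' end of the path as a complete intersection in separate variables, this colon is, up to free variables, a power of a shorter path ideal, so that its depth is computable by induction and, where the ideal genuinely splits as $I'+L$ with $L$ a complete intersection, by Theorem \ref{teo-iran}. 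Once the two outer depths are known, I would feed them into all three parts of the Depth Lemma (Lemma \ref{l11}) and use the monotonicity of $\varphi(\,\cdot\,,m,1)$ in its first argument to pin down $\depth(S/I_{n,m}^t)=\varphi(n,m,t)$ exactly.

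For the Stanley depth, the lower bound $\sdepth(S/I_{n,m}^t)\ge\varphi(n,m,t)$ in part (1) runs along the same recursion, but with Rauf's inequality $\sdepth(M)\ge\min\{\sdepth(U),\sdepth(N)\}$ (Lemma \ref{asia}) replacing the Depth Lemma and with Lemma \ref{lhvz} supplying the behaviour under adjoining a variable; since Lemma \ref{asia} yields only a lower bound, this direction is cleaner than the depth equality. For the upper bound in part (2) I would exhibit a monomial $u\notin I_{n,m}^t$ with $(I_{n,m}^t:u)=I_{n,m}$ --- for instance a suitable power of a single generator --- and invoke Lemma \ref{lem}(1) to get $\sdepth(S/I_{n,m}^t)\le\sdepth(S/(I_{n,m}^t:u))=\sdepth(S/I_{n,m})$; verifying $(I_{n,m}^t:u)=I_{n,m}$ is a finite monomial computation. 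The equality $\sdepth(S/I_{n,m})=\varphi(n,m,1)$ then follows by squeezing: the lower bound is the $t=1$ case of part (1), and the matching upper bound for the path ideal itself is produced in the base-case induction.

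I expect the main obstacle to be twofold. First, the exact determination of $(I_{n,m}^t:x_n)$ --- and of the analogous colon at $t=1$, which forces one to carry along an auxiliary family of ``path ideals with a shorter tail'', such as $I_{n-1,m}+(x_{n-m+1}\cdots x_{n-1})$, throughout the induction --- is a delicate piece of monomial bookkeeping. Second, the Depth Lemma only gives inequalities, so extracting the exact value requires showing that the depths of the kernel and cokernel of the sequence above are strictly ordered in the right way; this is precisely where the arithmetic of the floors and ceilings in $\varphi$ enters, and where a careless induction would yield only bounds rather than equalities.
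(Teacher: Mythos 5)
You should first note that the paper you are being compared against does not prove Theorem \ref{depth} at all: it is recalled verbatim from \cite[Theorem 2.6]{lucrare1}, so your sketch has to stand entirely on its own merits, and as it stands it has a genuine gap at its central step. The gap is your description of the kernel of the sequence $0\to S/(I_{n,m}^t:x_n)\to S/I_{n,m}^t\to S/(I_{n,m}^t+(x_n))\to 0$. The cokernel identification $S/(I_{n,m}^t+(x_n))\cong K[x_1,\ldots,x_{n-1}]/I_{n-1,m}^t$ is correct, but the colon ideal is \emph{not}, ``up to free variables, a power of a shorter path ideal''. Since $x_n$ divides only the generator $u=x_{n-m+1}\cdots x_n$, and only to the first power, a direct check gives $(I_{n,m}^t:x_n)=I_{n-1,m}^t+(x_{n-m+1}\cdots x_{n-1})\,I_{n,m}^{t-1}$, a sum mixing two consecutive powers of two different ideals. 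For instance, with $v=x_{n-m+1}\cdots x_{n-1}$ one has $v^2\notin(I_{n,m}^2:x_n)$ (because $v^2x_n=vu$ has degree $2m-1$), although $v^2$ does lie in the square of the shorter-tail ideal $I_{n-1,m}+(v)$; only the full saturation $(I_{n,m}^t:x_n^t)$ equals $\bigl((I_{n-1,m}+(v))^t\bigr)S$. Consequently, peeling off $x_n$ step by step produces cokernels that are mixed products of powers of two distinct ideals, which are not covered by your inductive hypothesis, so the induction does not close. This is precisely the phenomenon the present paper isolates for the cycle in Lemma \ref{inmt}(2), where $((J^t:x_n^{k-1}),x_n)=(I^{t+1-k}J'^{k-1},x_n)$, and Section \ref{s3} exists exactly because the depth of such mixed modules is hard: Propositions \ref{obsy} and \ref{obsy2} yield only conditional bounds, and Examples \ref{exem1} and \ref{exem2} obtain only inequalities after lengthy ad hoc computation. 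What you dismiss as ``delicate monomial bookkeeping'' is therefore the mathematical core of the theorem, and your sketch contains no mechanism for it.

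A second, smaller gap: even if all colon and cokernel depths were known, Lemma \ref{l11} gives only inequalities, and since $\varphi(n-1,m,t)$ can equal $\varphi(n,m,t)-1$, part (1) of the Depth Lemma applied to your sequence can fall short of the claimed value by one; pinning down the equality requires combining Lemma \ref{lem}(2) with Lemma \ref{l11}(3) and \emph{exact} (not lower-bound) information about the kernel, which you acknowledge but never supply. By contrast, your outline for part (2) is essentially sound: if one proves the exchange identity $(I_{n,m}^t:(x_1\cdots x_m)^{t-1})=I_{n,m}$ (true, but itself requiring an argument, e.g.\ by iterating $(I_{n,m}^t:x_1\cdots x_m)=I_{n,m}^{t-1}$), then Lemma \ref{lem}(1) gives $\sdepth(S/I_{n,m}^t)\le\sdepth(S/I_{n,m})$; still, the equality $\sdepth(S/I_{n,m})=\varphi(n,m,1)$ is delegated entirely to the unproven $t=1$ ``seed'', which is a substantial theorem in its own right, so even this part is a reduction rather than a proof.
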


%\begin{teor}(\cite[Theorem 1.3]{path})\label{mirpat}

%If $I_{n,m}=(x_1\cdots x_m,\; x_2\cdots x_{m+1},\; \ldots,\; x_{n-m+1}\cdots x_n) \subset S$, then:
%$$\sdepth(S/I_{n,m})=\depth(S/I_{n,m})= n+1 - \left\lfloor \frac{n+1}{m+1} \right\rfloor - \left\lceil \frac{n+1}{m+1} \right\rceil.$$
%\end{teor}
Let $2\leq m < n$ be two integers. The $m$-path ideal of the cycle graph $C_n$ is
$$J_{n,m}=I_{n,m}+(x_{n-m+2}\cdots x_nx_1,\; x_{n-m+3}\cdots x_nx_1x_2,\; \ldots,\; x_nx_1\cdots x_{m-1}).$$
%First, we prove the following lemma:
Let $d=\gcd(n,m)$ and let $t_0:=t_0(n,m)$ be the maximal integer such that $t_0\leq n-1$ and there exists a positive
integer $\alpha$ such that 
$$mt_0 = \alpha n + d.$$
Let $t\geq t_0$ be an integer.
Let $w=(x_1x_2\cdots x_n)^{\alpha}$, $w_t=w\cdot (x_1\cdots x_m)^{t-t_0}$, $r:=\frac{n}{d}$ and $s:=\frac{m}{d}$.
If $d>1$, we consider the ideal
$$U_{n,d}=(x_1,x_{d+1},\cdots,x_{d(r-1)+1})\cap (x_2,x_{d+2},\cdots,x_{d(r-1)+2})\cap \cdots \cap (x_d,x_{2d},\ldots,x_{rd}).$$
We recall the following results from \cite{lucrare2}:

\begin{lema}\label{lucky}(\cite[Lemma]{lucrare2})
With the above notations, we have:
\begin{enumerate}
\item[(1)] If $d=1$ then $(J_{n,m}^{t}:w_t)=\mathfrak m$ for all $t\geq t_0$.
\item[(2)] If $d>1$ then $(J_{n,m}^{t}:w_t)=U_{n,d}$ for all $t\geq t_0$.
\end{enumerate}
\end{lema}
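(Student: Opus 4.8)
The plan is to pass to the combinatorial description of the colon ideal and argue by a double inclusion, after reducing to the case $t=t_0$. Write the generators of $J_{n,m}$ as $f_i=x_ix_{i+1}\cdots x_{i+m-1}$ with indices read cyclically modulo $n$, so that a generator of $J_{n,m}^t$ is a product $f_{i_1}\cdots f_{i_t}$, which I encode by its \emph{coverage vector} $\mathbf e=(e_1,\dots,e_n)$, where $e_k$ counts how many of the chosen length-$m$ arcs $\{i_\nu,\dots,i_\nu+m-1\}$ contain $k$. By the standard formula for colon ideals of monomial ideals, a monomial $h$ with exponent vector $\mathbf c$ lies in $(J_{n,m}^t:w_t)$ if and only if there is a multiset of $t$ such arcs whose coverage vector satisfies $e_k\le c_k+a_k$ for all $k$, where $\mathbf a$ is the exponent vector of $w_t$ (that is, $a_k=\alpha+(t-t_0)$ for $k\le m$ and $a_k=\alpha$ for $k>m$). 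All congruences below are taken modulo $d$.

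First I would prove $(J_{n,m}^t:w_t)\subseteq U_{n,d}$ (with the convention $U_{n,1}=\mathfrak m$) by counting modulo $d$. Since $m=sd$, every length-$m$ arc meets each residue class in exactly $s$ vertices, so $\sum_{k\equiv\ell}e_k=ts$ for every class $\ell$ and every product of $t$ arcs. A short computation using $\alpha r=st_0-1$ gives $\sum_{k\equiv\ell}a_k=st-1$. Summing the inequality $e_k\le c_k+a_k$ over a fixed class therefore forces $\sum_{k\equiv\ell}c_k\ge 1$, i.e. $h$ is divisible by at least one variable from each class, which is precisely membership in $U_{n,d}=\bigcap_{\ell=1}^{d}(x_k:k\equiv\ell)$.

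For the reverse inclusion I would reduce to $t=t_0$. The containment $(J_{n,m}^{t_0}:w_{t_0})\subseteq(J_{n,m}^t:w_t)$ for $t\ge t_0$ is immediate: multiplying $h\,w_{t_0}\in J_{n,m}^{t_0}$ by $f_1^{\,t-t_0}=(x_1\cdots x_m)^{t-t_0}\in J_{n,m}^{t-t_0}$ yields $h\,w_t\in J_{n,m}^t$. Combined with the first inclusion this gives the sandwich $U_{n,d}\supseteq(J_{n,m}^t:w_t)\supseteq(J_{n,m}^{t_0}:w_{t_0})$, so it suffices to prove $U_{n,d}\subseteq(J_{n,m}^{t_0}:w_{t_0})$, and for this only the minimal generators $h=x_{j_1}\cdots x_{j_d}$ of $U_{n,d}$ (one vertex $j_\ell$ per class) must be checked. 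Here $w_{t_0}=w=(x_1\cdots x_n)^{\alpha}$ has uniform exponent vector $\alpha\mathbf 1$, and since $\sum_k(c_k+a_k)=\alpha n+d=m\,t_0=\sum_k e_k$, the inequality $e_k\le c_k+a_k$ is forced to be an equality in every coordinate. Thus the reverse inclusion amounts to the purely combinatorial claim: for every choice of one vertex $j_\ell$ per class, the vector $\alpha\mathbf 1+\sum_{\ell}e_{j_\ell}$ is the coverage vector of some multiset of $t_0$ length-$m$ arcs.

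The main obstacle is this existence statement, i.e. the nonnegativity of an integer solution $\mathbf y\ge 0$ of the circulant system $E_k=\sum_{j=0}^{m-1}y_{k-j}$ with $\sum_i y_i=t_0$. Passing to differences gives $y_k-y_{k-m}=E_k-E_{k-1}$, and since $d\mid m$ and $d\mid n$ this system splits into $d$ independent cycles, one per residue class, each of length $r$ (because $\gcd(s,r)=1$). Each cycle determines $\mathbf y$ up to an additive constant; the consistency conditions hold by the same class-sum computation as in the counting step, and the normalization $\sum_i y_i=t_0$ removes one constant, leaving $d-1$ integer degrees of freedom. When $d=1$ the solution is unique, and its nonnegativity is exactly the statement that a maximally balanced (Sturmian) set of $t_0$ starting points covers every length-$m$ window $\alpha$ or $\alpha+1$ times, with a single heavy window that can be rotated onto the prescribed vertex. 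When $d>1$ I would exploit the remaining $d-1$ constants, together with the slack from $\alpha\ge 1$ and the basic fact that the $r$ arcs starting at $b,b+m,\dots,b+(r-1)m$ tile the cycle $s$ times and hence realize $s\mathbf 1$, to force every $y_i\ge 0$. Establishing this uniformly for an arbitrary, possibly clustered, system of representatives $j_1,\dots,j_d$—rather than only for equally spaced ones—is the delicate heart of the proof.
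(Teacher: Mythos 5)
The paper you were given never proves Lemma \ref{lucky} at all---it is quoted from \cite{lucrare2}---so your argument has to stand entirely on its own. Much of it does: the divisibility criterion for membership in $(J_{n,m}^t:w_t)$, the counting modulo $d$ giving $(J_{n,m}^t:w_t)\subseteq U_{n,d}$ (with the convention $U_{n,1}=\mathfrak m$, which correctly unifies (1) and (2)), the inclusion $(J_{n,m}^{t_0}:w_{t_0})\subseteq (J_{n,m}^{t}:w_{t})$ reducing everything to $t=t_0$, and the remark that for $t=t_0$ and $h=x_{j_1}\cdots x_{j_d}$ the degree count $\sum_k(c_k+a_k)=\alpha n+d=mt_0=\sum_k e_k$ forces equality in every coordinate, are all correct. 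They reduce the lemma to a single combinatorial claim: for every choice of representatives $j_1,\ldots,j_d$, one per residue class modulo $d$, there exists a multiset of $t_0$ cyclic arcs of length $m$ covering each $j_\ell$ exactly $\alpha+1$ times and every other vertex exactly $\alpha$ times.

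The gap is that this claim---which is the entire content of the reverse inclusion, hence of the lemma---is never proved. You recast it as a circulant system and count degrees of freedom, but for $d=1$ you appeal to an unproved ``maximally balanced (Sturmian)'' principle, and for $d>1$ you only say you ``would exploit'' the free constants, conceding that arbitrary, possibly clustered, representatives are ``the delicate heart of the proof''. A proof that stops where the heart begins is not a proof: nonnegativity of an integer solution is established in no case. The missing step does admit an elementary construction, which is the sort of argument you need. For $d=1$, the $t_0$ arcs starting at $j,\,j+m,\ldots,j+(t_0-1)m$ are consecutive, so they cover a cyclic interval of length $mt_0=\alpha n+1$, i.e.\ every vertex $\alpha$ times and $j$ once more; hence $x_jw\in J_{n,m}^{t_0}$ for every $j$. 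For $d>1$, for each class $\ell$ take the chain of consecutive arcs starting at $j_\ell$ whose partial wrap is the cyclic interval $[j_\ell,\,j_{\ell-1}]$: such a chain length $k_\ell\in\{1,\ldots,r\}$ exists because that interval has length divisible by $d$ and $\gcd(s,r)=1$, so $k_\ell m\equiv (j_{\ell-1}-j_\ell)+1 \pmod n$ is solvable. These $d$ partial intervals cover every vertex a constant number of times plus once more at each $j_\ell$; the total number of arcs used is congruent to $t_0$ modulo $r$ and is at most $n<t_0+r$, hence at most $t_0$, and one pads chains by blocks of $r$ arcs (each block adding the uniform coverage $s$) until exactly $t_0$ arcs are used. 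Without some such explicit argument, the lemma remains unproved in your write-up.
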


We also recall the following result of \cite{lucrare2}:

\begin{teor}(\cite[Theorem 2.12]{lucrare2})\label{t212}
With the above notations, we have that $$\depth(S/J_{n,m}^t)\leq \varphi(n-1,m,t)+1,\text{ for all }t\geq 1.$$
\end{teor}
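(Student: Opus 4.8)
The plan is to relate $S/J_{n,m}^t$ to the path‑ideal power $I_{n-1,m}^t$ by killing the variable $x_n$, and then to extract the upper bound from the Depth Lemma together with Lemma \ref{lem}(2). First I would record the structural observation that among the generators of $J_{n,m}$, exactly those not involving $x_n$ are $x_1\cdots x_m,\ldots,x_{n-m}\cdots x_{n-1}$, i.e.\ precisely the generators of $I_{n-1,m}$ viewed in $\bar S:=K[x_1,\ldots,x_{n-1}]$; every remaining generator (the last path generator $x_{n-m+1}\cdots x_n$ together with all $m-1$ wrap‑around generators) is divisible by $x_n$. Consequently, reducing modulo $x_n$ sends a product $g_1\cdots g_t$ of generators of $J_{n,m}$ to $0$ unless every factor avoids $x_n$, so the image of $J_{n,m}^t$ in $\bar S$ is exactly $I_{n-1,m}^t$. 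This yields the isomorphism $S/(J_{n,m}^t+(x_n))\cong \bar S/I_{n-1,m}^t$, whence by Theorem \ref{depth} applied with $n-1$ in place of $n$,
$$\depth\bigl(S/(J_{n,m}^t+(x_n))\bigr)=\varphi(n-1,m,t).$$

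Next I would set up the short exact sequence
$$0\longrightarrow S/(J_{n,m}^t:x_n)\xrightarrow{\ \cdot x_n\ } S/J_{n,m}^t\longrightarrow S/(J_{n,m}^t+(x_n))\longrightarrow 0,$$
writing $U=S/(J_{n,m}^t:x_n)$, $M=S/J_{n,m}^t$ and $N=S/(J_{n,m}^t+(x_n))$, so that $\depth N=\varphi(n-1,m,t)$ by the previous step. Since $J_{n,m}^t$ has no generator of degree $1$, we have $x_n\notin J_{n,m}^t$, so Lemma \ref{lem}(2) applies and gives $\depth U\geq \depth M$.

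Finally I would feed these into part (3) of the Depth Lemma, $\depth N\geq \min\{\depth U-1,\depth M\}$, and split according to the minimum. If the minimum is $\depth M$, then $\depth M\leq \depth N=\varphi(n-1,m,t)$; if it is $\depth U-1$, then $\depth U\leq \depth N+1$, and combined with $\depth M\leq \depth U$ from Lemma \ref{lem}(2) this again gives $\depth M\leq \depth N+1=\varphi(n-1,m,t)+1$. Either way $\depth(S/J_{n,m}^t)\leq \varphi(n-1,m,t)+1$, as claimed. The only genuinely delicate point is the first step—verifying that setting $x_n=0$ collapses $J_{n,m}^t$ exactly onto $I_{n-1,m}^t$ and not onto something larger—so I would take care to check that no product of generators containing a wrap‑around factor survives the reduction; once that is in place, the depth bookkeeping is routine.
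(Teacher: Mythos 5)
Your proof is correct. One caveat about the comparison: this paper never proves Theorem \ref{t212} --- it is quoted without proof from \cite[Theorem 2.12]{lucrare2} --- so there is no internal proof to measure your argument against. What you wrote is, however, precisely the machinery the paper itself develops later, in Section \ref{s3}: your identity $(J_{n,m}^t,x_n)=(I_{n-1,m}^t,x_n)$ is Lemma \ref{inmt}(3), your short exact sequence $0\to S/(J_{n,m}^t:x_n)\to S/J_{n,m}^t\to S/(J_{n,m}^t,x_n)\to 0$ is the sequence \eqref{siruri} of Proposition \ref{obsy} in the case $k=1$, and your combination of Lemma \ref{lem}(2) with part (3) of the Depth Lemma is the same bookkeeping carried out there. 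The point you flagged as delicate does check out: a product $g_1\cdots g_t$ of generators of $J_{n,m}$ survives setting $x_n=0$ exactly when every factor avoids $x_n$, and the generators of $J_{n,m}$ avoiding $x_n$ are exactly those of $I_{n-1,m}$, so the image of $J_{n,m}^t$ in $K[x_1,\ldots,x_{n-1}]$ is indeed $I_{n-1,m}^t$, giving $\depth(S/(J_{n,m}^t,x_n))=\varphi(n-1,m,t)$ by Theorem \ref{depth}. Finally, the two-case analysis of $\min\{\depth U-1,\depth M\}$ delivers $\depth(S/J_{n,m}^t)\leq \varphi(n-1,m,t)+1$ in either case, so your proposal is a sound, self-contained derivation of the cited result, essentially re-deriving the approach of the companion paper.
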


\section{Main results}

% Demonstratie alternativa la teorema 2.10...
In the beginning of this section, we prove the following lemma:

\begin{lema}\label{l1}
%Let $n\geq 2$ and $t\geq n-1$. 
We have that:
\begin{enumerate}
\item[(1)] $\mathfrak m\in \Ass(S/J_{n,n-1}^t)$, for all $n\geq 2$ and $t\geq n-1$.
\item[(2)] If $n\geq 3$ is odd then $\mathfrak m\notin \Ass(S/J_{n,n-2}^t)$, for all $t<\frac{n-1}{2}$.
\item[(3)] If $n\geq 3$ is odd then $\mathfrak m\in \Ass(S/J_{n,n-2}^t)$, for all $t\geq \frac{n-1}{2}$.
\item[(4)] If $n\geq 3$ is even then $\mathfrak m\notin \Ass(S/J_{n,n-2}^t)$, for all $t\geq 1$.
\end{enumerate}
\end{lema}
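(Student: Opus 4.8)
The plan is to handle the three cases separately, using the colon-ideal description of Lemma \ref{lucky} for the two positive assertions (1) and (2), and a direct combinatorial membership criterion for the negative assertion (3). Throughout I will use the standard fact that $\mathfrak{m}\in\Ass(S/I)$ for a monomial ideal $I$ if and only if there is a monomial $v\notin I$ with $(I:v)=\mathfrak{m}$; this is exactly the content of Lemma \ref{lem7} together with the remark that a proper monomial colon ideal containing $\mathfrak{m}$ must equal $\mathfrak{m}$.

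For (1) I would first note that $m=n-1$ gives $d=\gcd(n,n-1)=1$, and I would read off $t_0(n,n-1)$ from its defining congruence $(n-1)t_0\equiv 1\pmod n$: since $n-1\equiv -1$, this forces $t_0\equiv -1\pmod n$, so the maximal admissible value is $t_0=n-1$ (with $\alpha=n-2$). Then Lemma \ref{lucky}(1) yields $(J_{n,n-1}^t:w_t)=\mathfrak{m}$ for every $t\geq t_0=n-1$. Because $\mathfrak{m}\neq S$, this forces $w_t\notin J_{n,n-1}^t$, and hence $w_t$ is the required witness showing $\mathfrak{m}\in\Ass(S/J_{n,n-1}^t)$. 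For (2), $n$ odd gives $d=\gcd(n,n-2)=\gcd(n,2)=1$, and solving $(n-2)t_0\equiv 1\pmod n$, i.e.\ $2t_0\equiv -1\pmod n$, gives $t_0=(n-1)/2$ (with $\alpha=(n-3)/2$). Thus the hypothesis $t\geq (n-1)/2$ is precisely $t\geq t_0$, and the same application of Lemma \ref{lucky}(1) finishes the case. The degenerate values $n=2$ in (1) and $n=3$ in (2), where $m=1$ and $J=\mathfrak{m}$, are immediate since then $J^t=\mathfrak{m}^t$.

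For (3) the situation is genuinely different: $n$ even gives $d=\gcd(n,n-2)=2>1$, so Lemma \ref{lucky} only computes the single colon $(J^t:w_t)=U_{n,2}$, which is not $\mathfrak{m}$; but that alone does not exclude other witnesses, so I will argue directly with a membership criterion. Writing the generators as $g_i=(x_1\cdots x_n)/(x_ix_{i+1})$ (indices mod $n$), a product of $t$ of them in which $g_i$ is used $a_i$ times, $\sum_i a_i=t$, equals $\prod_k x_k^{\,t-a_{k-1}-a_k}$. Hence a monomial $u=\prod_k x_k^{c_k}$ lies in $J_{n,n-2}^t$ iff the system $a\geq 0$, $\sum_i a_i=t$, $a_{k-1}+a_k\geq b_k$ for all $k$ is solvable, where $b_k:=t-c_k$. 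Equivalently $u\in J_{n,n-2}^t$ iff $C(b)\leq t$, where $C(b):=\min\{\sum_i a_i : a\geq 0,\ a_{k-1}+a_k\geq b_k\ \forall k\}$. The decisive point is that for even $n$ the cycle $C_n$ is bipartite, so the constraint matrix (the incidence matrix of $C_n$) is totally unimodular; LP duality then identifies $C(b)$ with a maximum-weight independent set, namely $C(b)=\max\{\sum_{k\in M}b_k : M\subseteq[n]\text{ independent in }C_n\}$.

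With this in hand the conclusion is short. Given any monomial $u\notin J_{n,n-2}^t$, i.e.\ $C(b)\geq t+1$, let $M^{*}$ attain the maximum. Since $M^{*}$ is independent in $C_n$ and $C_n$ has edges, $M^{*}\neq[n]$, so I may pick $j\notin M^{*}$. Then $M^{*}$ remains feasible for the weights $b-e_j$ with unchanged value, giving $C(b-e_j)\geq C(b)\geq t+1>t$, that is $x_ju\notin J_{n,n-2}^t$. Therefore no monomial $u\notin J_{n,n-2}^t$ can satisfy $(J_{n,n-2}^t:u)=\mathfrak{m}$, and since $(J_{n,n-2}^t:u)=S$ whenever $u\in J_{n,n-2}^t$, we get $\mathfrak{m}\notin\Ass(S/J_{n,n-2}^t)$ for all $t\geq 1$. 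The main obstacle is the total-unimodularity/integrality step that rewrites $C(b)$ as a maximum-weight independent set; everything after it is bookkeeping. It is also worth stressing that this step fails exactly when $n$ is odd: there the half-integral vector $(\tfrac12,\ldots,\tfrac12)$ is an optimal fractional dual solution with full support, so no coordinate $j$ can be freely decreased, which is precisely the reason assertion (2) goes the opposite way.
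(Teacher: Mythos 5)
Your proof is correct, and while parts (1) and (2) coincide with the paper's argument, part (3) takes a genuinely different route. For (1) and (2) you do exactly what the paper does: compute $d=1$, identify $t_0(n,n-1)=n-1$ and $t_0(n,n-2)=\frac{n-1}{2}$, and invoke Lemma \ref{lucky}(1), the colon ideal being proper forcing $w_t\notin J^t$; the paper additionally gives a self-contained second proof of (1) via the explicit witness $w_t=x_1^{t-1}\cdots x_{n-1}^{t-1}x_n^{n-2}$, and your explicit treatment of the degenerate cases $n=2$, $n=3$ (where the ideal is $\mathfrak m$) is a detail the paper passes over. The divergence is in (3). The paper argues by contradiction with elementary parity bookkeeping: from $(J_{n,n-2}^t:w)=\mathfrak m$ it extracts a factorization $w=u_1\cdots u_{t-1}v$ with $v\notin J_{n,n-2}$ and $x_1v\in J_{n,n-2}$, uses the fact that every generator contains exactly $\frac{n-2}{2}$ odd-indexed and $\frac{n-2}{2}$ even-indexed variables (this is where evenness of $n$ enters), and derives a contradiction on $\supp(v)$. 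You instead encode membership $u=\prod_k x_k^{c_k}\in J_{n,n-2}^t$ as solvability of the covering system $a\geq 0$, $a_{k-1}+a_k\geq t-c_k$ with value at most $t$, observe that the constraint matrix is the vertex--edge incidence matrix of $C_n$, hence totally unimodular exactly when $n$ is even, and use LP duality to rewrite the covering number as a maximum-weight independent set in $C_n$; since an independent set omits some vertex $j$, the weight vector can be decreased in coordinate $j$ without losing optimal value, so $x_ju\notin J^t$ and no monomial can have colon ideal $\mathfrak m$. Both arguments are sound. Yours buys a reusable membership criterion for all powers of $J_{n,n-2}$ and a conceptual explanation of the odd/even dichotomy (bipartiteness; the fractional dual $(\tfrac12,\ldots,\tfrac12)$ with full support is precisely what makes (2) go the other way), at the price of importing total unimodularity and integral LP duality; the paper's argument buys elementarity, needing nothing beyond degree and support counting, at the cost of a more ad hoc case analysis. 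One small attribution slip: the criterion ``$\mathfrak m\in\Ass(S/I)$ iff $(I:v)=\mathfrak m$ for some monomial $v\notin I$'' is the standard description of associated primes of monomial ideals, not the content of Lemma \ref{lem7} (which relates $\Ass$ to depth and Stanley depth); this does not affect correctness.
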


\begin{proof}
(1) The result follows from Lemma \ref{lucky}(1). However, we present here a new proof:
Let $w_t:=x_1^{t-1}\cdots x_{n-1}^{t-1}x_n^{n-2}$. Note that $J_{n,n-1}^t$ is 
minimally generated by monomials of degree $(n-1)t$, while $\deg(w_t)=(n-1)t-1$. Thus, $w_t\notin J_{n,n-1}^t$.
We claim that
\begin{equation}\label{cleim}
(J_{n,n-1}^t:w_t)=\mathfrak m.
\end{equation}
Since  $(x_1\cdots x_{n-1})^{t-n+1}\in J_{n,n-1}^{t-n+1}$, $w_t=(x_1\cdots x_{n-1})^{t-n+1}w_{n-1}$ and $w_t\notin J_{n,n-1}^t$ it follows that
$$ (J_{n,n-1}^{n-1}:w_{n-1}) \subseteq (J_{n,n-1}^t:w_t) \subsetneq S .$$
Therefore, as $\mathfrak m$ is maximal, it is enough to prove \eqref{cleim} for $t=n-1$.

Note that $G(J_{n,n-1})=\{u_1,\ldots,u_n\}$, where $u_i=\prod_{j\neq i}x_j$ for all $1\leq j\leq n$. It
is easy to see that:
$$ x_jw_{n-1} = \prod_{k\neq j} u_k \in J_{n,n-1}^{n-1}\text{ for all }1\leq j\leq n,$$
hence \eqref{cleim} is true.

(2) Assume by contradiction that $\mathfrak m \in \Ass(S/J_{n,n-2}^t)$. Then there exists a monomial $w\in S$ with $w\notin J_{n,n-2}^t$ such that 
		$(J_{n,n-2}^t :w) = \mathfrak m$. By degree reason, $\deg(w)\geq t(n-2)-1$. Without any loss of generality, we may assume that
		$w=x_1^{a_1}\cdots x_n^{a_n}$ with $a_1\geq a_2\geq \cdots \geq a_n$. Then, we deduce that $a_1\geq t$. Since $x_1f\in J_{n,n-2}^t$ it
		follows that $w\in J_{n,n-2}^t$, a contradiction.

(3) The result follows from Lemma \ref{lucky}(1), since $\gcd(n,n-2)=1$ and $t_0(n,n-2)=\frac{n-1}{2}$ for $n$ odd.

(4) First, note that $d=\gcd(n,n-2)=2$.
    %In order to complete the proof, it is enough to show that $\mathfrak m\notin \Ass(S/J_{n,n-2}^t)$.		
		Assume by contradiction that there exists a monomial $w\in S$ with $w\notin J_{n,n-2}^t$ such that 
		$(J_{n,n-2}^t :w) = \mathfrak m$. It follows that $x_jw \in J_{n,n-2}^t$ for all $1\leq j\leq n$.
		Since $x_1w\in J_{n,n-2}^t$ and $w\notin J_{n,n-2}^{t}$ it follows that $w=u_1\cdots u_{t-1}v$,
		where $u_j\in G(J_{n,n-2})$, $v\notin J_{n,n-2}$ and $x_1v \in J_{n,n-2}$. 
		This implies
		\begin{enumerate}
		\item[(i)] $\supp(v)=\{x_1,x_2,\ldots,x_n\}\setminus \{x_1,x_j,x_{j+1}\}$, where $2\leq j\leq n-1$, or
		\item[(ii)] $\supp(v)=\{x_1,x_2,\ldots,x_n\}\setminus \{x_1,x_j\}$, where $3\leq j\leq n-1$.
		\end{enumerate}
    Note that, if $x_2(u_1\cdots u_{t-1})=x_{\ell}(u'_1\cdots u'_{t-1})$ for some $u'_j \in G(J_{n,n-2})$, then
		it follows that $\ell$ is even, since $u_j$'s and $u'_j$'s are products of $\frac{n-2}{2}$ variables with
		odd indices and $\frac{n-2}{2}$ variables with even indices. Therefore, if $x_2w\in J_{n,n-2}^t$ then
		$x_{\ell}v\in J_{n,n-2}$ for some even index $\ell$. 
		
		In the case (i), it follows that $\supp(x_{\ell}v)=\{x_1,x_2,\ldots,x_n\}\setminus \{x_1,x_j\}$ with $j\neq 1$ and $j$ odd. But this contradicts
		the fact that $x_{\ell}v\in J_{n,n-2}$. 
		
		In the case (ii), if $j$ is odd, then $\supp(x_{\ell}v)=\{x_1,x_2,\ldots,x_n\}\setminus\{x_1,x_j\}$ and
		again we get a contradiction. It follows that 
		\begin{equation}\label{kisu}
		\supp(v)=\{x_1,x_2,\ldots,x_n\}\setminus\{x_1,x_{2k}\},\text{ where }2\leq k\leq \frac{n-2}{2}.
		\end{equation}
		We
		claim that $w\in J_{n-2,2}^t$. Indeed, since $x_2w \in J_{n-2,2}^t$, from \eqref{kisu} it follows
		that there exist $u'_j\in G(J_{n,n-2})$ with $1\leq j\leq t-1$ such that $x_2(u_1\cdots u_{t-1})=x_{2k}(u'_1\cdots u'_{t-1})$.
		Therefore, 
		$$w=(u_1\cdots u_{t-1})v=x_2(u_1\cdots u_{t-1})\frac{v}{x_2}=x_{2k}(u'_1\cdots u'_{t-1})\frac{vx_{2k}}{x_2}.$$
		Denoting $v'=v\cdot\frac{x_{2k}}{x_2}$, it is easy to see that $\supp(v')\subset \{x_3,\ldots,x_n\}$. Hence, $v'\in J_{n,n-2}$
		and thus $w\in J_{n,n-2}^t$, a contradiction.		
\end{proof}		

%Using Lemma \ref{l1}, we can reprove the following results of \cite{lucrare2}:
		
\begin{teor}\label{t1}
%Let $n\geq 2$ and $t\geq n-1$.  
We have that:
\begin{enumerate}
\item[(1)] $\sdepth(S/J_{n,n-1}^t)=\depth(S/J_{n,n-1}^t)=0$, for all $n\geq 2$ and $t\geq n-1$.
\item[(2)] If $n\geq 3$ is odd then $\sdepth(S/J_{n,n-2}^t)=\depth(S/J_{n,n-2}^t)=0$, for all $t\geq \frac{n-1}{2}$.
\item[(3)] If $n\geq 5$ is odd then $\sdepth(S/J_{n,n-2}^t),\depth(S/J_{n,n-2}^t)>0$, for all $t < \frac{n-1}{2}$.
\item[(4)] If $n\geq 5$ is even then $\sdepth(S/J_{n,n-2}^t), \depth(S/J_{n,n-2}^t)>0$, for all $t\geq 1$.
\item[(5)] If $n\geq 5$ is even then $\frac{n}{2}\geq \sdepth(S/J_{n,n-2}^t)\geq \depth(S/J_{n,n-2}^t)=1$, for all $t\geq n-1$.
\end{enumerate}
\end{teor}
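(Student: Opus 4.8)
The first two parts are immediate consequences of the preliminary lemmas, so the real content lies in part (3). For (1), Lemma \ref{l1}(1) gives $\mathfrak m\in\Ass(S/J_{n,n-1}^t)$ whenever $n\ge 2$ and $t\ge n-1$, and the equivalence $(1)\Leftrightarrow(2)\Leftrightarrow(3)$ of Lemma \ref{lem7} then forces $\depth(S/J_{n,n-1}^t)=\sdepth(S/J_{n,n-1}^t)=0$. Part (2) is identical, using Lemma \ref{l1}(2) in place of Lemma \ref{l1}(1) and the range $t\ge\frac{n-1}{2}$ for $n$ odd.

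For part (3) I would first extract the lower bounds. Since $n$ is even, Lemma \ref{l1}(3) says $\mathfrak m\notin\Ass(S/J_{n,n-2}^t)$ for every $t\ge 1$, so Lemma \ref{lem7} rules out the value $0$ and yields $\depth(S/J_{n,n-2}^t)\ge 1$ and $\sdepth(S/J_{n,n-2}^t)\ge 1$. Once I establish $\depth(S/J_{n,n-2}^t)=1$, the middle inequality $\sdepth\ge\depth$ will follow for free from $\sdepth\ge 1=\depth$, so the chain reduces to the two upper bounds.

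Both upper bounds come from the colon ideal. Here $d=\gcd(n,n-2)=2$, and a short computation shows that the threshold of Lemma \ref{lucky} is $t_0(n,n-2)=n-1$ (with $\alpha=n-3\ge 1$ for $n\ge 4$); thus the hypothesis $t\ge n-1$ is exactly $t\ge t_0$, and Lemma \ref{lucky}(2) applies to give $(J_{n,n-2}^t:w_t)=U_{n,2}$. With $r=n/2$, the ideal $U_{n,2}=(x_1,x_3,\dots,x_{n-1})\cap(x_2,x_4,\dots,x_n)$ is the intersection of the prime on the odd-indexed variables with the prime on the even-indexed variables, i.e.\ a partition of $\{x_1,\dots,x_n\}$ into two blocks. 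Lemma \ref{liema} (with $d=2$) then gives $\depth(S/U_{n,2})=1$, and Lemma \ref{lem}(2) yields $\depth(S/J_{n,n-2}^t)\le\depth(S/U_{n,2})=1$; combined with the lower bound this pins down $\depth(S/J_{n,n-2}^t)=1$.

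For the Stanley depth the same colon and Lemma \ref{lem}(1) reduce matters to bounding $\sdepth(S/U_{n,2})$, and here is where I expect the main obstacle: there is no ready-made analogue of Lemma \ref{liema} for $\sdepth$, so I would argue directly with Stanley decompositions. The key observation is that a product of an odd-indexed and an even-indexed variable lies in $U_{n,2}$, so the minimal generators of $U_{n,2}$ are exactly such products; consequently, in any Stanley decomposition the summand $1\cdot K[Z]$ carrying the monomial $1$ must have $Z$ contained either in the odd block or in the even block, whence $|Z|\le n/2$. This caps every decomposition, giving $\sdepth(S/U_{n,2})\le n/2$ and therefore $\sdepth(S/J_{n,n-2}^t)\le n/2$ via Lemma \ref{lem}(1) (only the inequality $\le n/2$ is needed, not the exact value). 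Assembling the three estimates yields $\frac{n}{2}\ge\sdepth(S/J_{n,n-2}^t)\ge\depth(S/J_{n,n-2}^t)=1$, as claimed.
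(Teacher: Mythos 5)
Your proof is correct and follows the same overall strategy as the paper: parts (1) and (2) via Lemma \ref{l1} and Lemma \ref{lem7}, and in part (3) the lower bounds from Lemma \ref{l1}(3) and Lemma \ref{lem7}, with both upper bounds obtained by passing to the colon ideal $U_{n,2}$ via Lemma \ref{lucky}(2) and Lemma \ref{lem}. The one genuine difference is the estimate $\sdepth(S/U_{n,2})\le \frac{n}{2}$: the paper simply cites \cite[Theorem 1.3]{mirci}, whereas you prove it directly by observing that the Stanley summand containing the monomial $1$ must be of the form $1\cdot K[Z]$, and since every product $x_ix_j$ with $i$ odd and $j$ even is a generator of $U_{n,2}$, the set $Z$ must lie entirely in the odd block or entirely in the even block, so $|Z|\le \frac{n}{2}$. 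This direct argument is valid and makes the proof self-contained at essentially no extra cost. Two smaller points in your favour: you explicitly verify $t_0(n,n-2)=n-1$ (with $\alpha=n-3\ge 1$ for even $n\ge 4$), which is needed to invoke Lemma \ref{lucky}(2) under the hypothesis $t\ge n-1$ and which the paper leaves implicit; and you correctly note that the middle inequality $\sdepth\ge\depth$ in the chain is not an appeal to the (false in general) Stanley conjecture, but follows from $\sdepth\ge 1=\depth$ once the depth is pinned down.
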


\begin{proof}
(1), (2), (3) and (4) follows immediately from Lemma \ref{l1} and Lemma \ref{lem7}.

(5) It follows from (4) and \cite[Corollary 2.8]{lucrare2}.
% From Lemma \ref{lem} and Lemma \ref{lucky}(2), we have that
% \begin{equation}\label{kra}
% \depth(S/J_{n,n-2}^t)\leq \depth(S/U_{n,2}),\;\sdepth(S/J_{n,n-2}^t)\leq \sdepth(S/U_{n,2})
% \end{equation}
% where $U_{n,2}=(x_1,x_3,\ldots,x_{2k-1})\cap (x_2,x_4,\ldots,x_{2k})$. Since $\depth(S/U_{n,2})=1$, from \eqref{kra} 
% it follows that 
% \begin{equation}\label{kraa}
% \depth(S/J_{n,n-2}^t)\leq 1. 
% \end{equation}
% On the other hand, from \cite[Theorem 1.3]{mirci}, we have that $\sdepth(S/U_{n,2})\leq \frac{n}{2}$ and thus, from \eqref{kra}
% it follows that 
% \begin{equation}\label{kraaa}
% \sdepth(S/J_{n,n-2}^t)\leq \frac{n}{2}. 
% \end{equation}
% The conclusion follows from (3), \eqref{kis}, \eqref{kraa} and \eqref{kraaa}.
\end{proof}

\begin{obs}\rm
Note that (1) from Theorem \ref{t1} was proved in \cite[Theorem 3.1]{lucrare2}
and (2) from Theorem \ref{t1} was proved in \cite[Corollary 2.8(1)]{lucrare2}.
The results (3), (4) and (5) from Theorem \ref{t1} are new.
\end{obs}

\begin{lema}\label{inmt2}
Let $n>m\geq 2$ and $t\geq 1$ be some integers. Then
$$(J_{n,m}^t:(x_{n-m+1}x_{n-m+2}\cdots x_{n-1})^t) = \begin{cases} (x_{n-m},x_n)^t,& n\leq 2m \\
             (I_{n-m-1,m}S+(x_{n-m},x_n))^t,& n\geq 2m+1 \end{cases}.$$
\end{lema}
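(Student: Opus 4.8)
The plan is to work with the explicit cyclic description of the generators. Writing indices modulo $n$ in $\{1,\dots,n\}$, the ideal $J_{n,m}$ is minimally generated by the $n$ ``arc'' monomials $f_i=x_ix_{i+1}\cdots x_{i+m-1}$, $i=1,\dots,n$, each supported on $m$ consecutive vertices of the cycle. Set $u=x_{n-m+1}\cdots x_{n-1}$, so $\supp(u)=\{n-m+1,\dots,n-1\}$ is an arc of $m-1$ vertices whose two neighbours on the cycle are $x_{n-m}$ and $x_n$; note that $x_{n-m}\,u=f_{n-m}$ and $x_n\,u=f_{n-m+1}$, so both lie in $J_{n,m}$. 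I denote by $K$ the ideal appearing on the right-hand side in each of the two cases, and by $L=(J_{n,m}^t:u^t)$ the colon to be computed.

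For the inclusion $K^t\subseteq L$ I would argue directly: it suffices to check that every generator $h$ of $K$ satisfies $hu\in J_{n,m}$. For $h=x_{n-m}$ and $h=x_n$ this is the identity just recorded (using $n>m$, so that $x_{n-m}$ is a genuine variable), and for $h=f_j$ with $1\le j\le n-2m$, i.e.\ a generator of $I_{n-m-1,m}S$ in the case $n\ge 2m+1$, it is trivial since $f_j\in J_{n,m}$. Then for any generators $h_1,\dots,h_t$ of $K$ we get $h_1\cdots h_t\,u^t=\prod_k (h_k u)\in J_{n,m}^t$, whence $h_1\cdots h_t\in L$; as these products generate $K^t$, this yields $K^t\subseteq L$.

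The reverse inclusion $L\subseteq K^t$ is the heart of the matter, and the main obstacle is that the colon of a power is in general strictly larger than the corresponding power of the colon, so I must control all $t$-fold factorizations simultaneously. I would use that $L$ is generated by the monomials $P=\big(g_1\cdots g_t\big)/\gcd(g_1\cdots g_t,\,u^t)$ as $g_1,\dots,g_t$ range over the generators of $J_{n,m}$. The key local lemma is that \emph{for every generator $g=f_i$ one has $g/\gcd(g,u)\in K$}. Here $g/\gcd(g,u)=\prod_{j\in\supp(g)\setminus\supp(u)}x_j$, and the point is a contiguity argument on the cycle: the arc $\supp(g)$ meets the complementary arc $C=\{n,1,\dots,n-m\}$ of $\supp(u)$; if this overlap avoids both endpoints $n-m$ and $n$ of $C$, then, $\supp(g)$ being connected, it cannot also meet $\supp(u)$ without passing through $n-m$ or $n$, forcing $\supp(g)\subseteq\{1,\dots,n-m-1\}$ and hence $g=f_j$ with $j\le n-2m$ — possible only when $n\ge 2m+1$; otherwise the overlap contains $x_{n-m}$ or $x_n$. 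In every case $g/\gcd(g,u)$ is divisible by a generator of $K$, and this dichotomy is exactly what separates the two cases of the statement (the sub-case $\supp(g)\subseteq\{1,\dots,n-m-1\}$ being vacuous precisely when $n\le 2m$).

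Finally I would compare exponents. Setting $Q=\prod_{k=1}^t g_k/\gcd(g_k,u)$, which lies in $K^t$ by the local lemma, a direct multidegree check shows $Q\mid P$: for $j\notin\supp(u)$ both sides carry $x_j$-exponent $\sum_k\deg_{x_j}(g_k)$, while for $j\in\supp(u)$ one has $\deg_{x_j}(Q)=0\le\deg_{x_j}(P)$, since each squarefree $g_k$ contributes $\deg_{x_j}(g_k)-\min(\deg_{x_j}(g_k),1)=0$ there. As $K^t$ is an ideal and $Q\in K^t$, divisibility gives $P\in K^t$; since the monomials $P$ generate $L$, this proves $L\subseteq K^t$, and together with the first inclusion the equality $L=K^t$ follows.
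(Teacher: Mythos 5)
Your proposal is correct, and while it proves the same two inclusions as the paper, the mechanism for the hard inclusion $L\subseteq K^t$ is genuinely different. The paper first records the degree-one identity $(J_{n,m}:u)=K$ and then, given an arbitrary monomial $v$ with $u^tv\in J_{n,m}^t$, builds $t$ factors $v_1,\ldots,v_t\in K$ with $v_1\cdots v_t\mid v$ by a case analysis on $a=\deg_{x_{n-m}}(v)$ and $b=\deg_{x_n}(v)$: if $a+b\geq t$ it takes powers of the two variables, and if $a+b<t$ it uses a counting argument (at most $a+b$ of the generators $g_i$ in a factorization $g_1\cdots g_t\mid u^tv$ can involve $x_{n-m}$ or $x_n$, so the remaining ones must lie in $G(I_{n-m-1,m})$, which is contradictory when $n\leq 2m$). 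You instead exploit the canonical generating set of a colon of monomial ideals, reduce everything to the one-generator statement $g/\gcd(g,u)\in K$, and close with the observation that squarefreeness forces $\gcd(g_1\cdots g_t,u^t)=\prod_k\gcd(g_k,u)$, so that each generator $P$ of $L$ is literally a product of $t$ elements of $K$. Both arguments hinge on the same combinatorial dichotomy -- an arc of $m$ consecutive vertices avoiding both $x_{n-m}$ and $x_n$ must sit inside $\{1,\ldots,n-m-1\}$, which is possible only when $n\geq 2m+1$ -- but your route eliminates the case analysis entirely and in fact establishes a more general principle: for a squarefree monomial ideal $J$ and a squarefree monomial $u$, the colon $(J^t:u^t)$ is generated by the $t$-fold products of the reductions $g/\gcd(g,u)$, i.e., colon against powers of $u$ commutes with taking powers. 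The paper's proof is more elementary and self-contained (it never invokes the generator description of colon ideals), while yours is shorter at the key step and isolates the structural reason the lemma holds.
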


\begin{proof}
It is easy to check that
\begin{equation}\label{eco1}
(J_{n,m}:(x_{n-m+1} \cdots x_{n-1})) = \begin{cases} (x_{n-m},x_n),& n\leq 2m \\ I_{n-m-1,m}S+(x_{n-m},x_n),& n\geq 2m+1 \end{cases}.
\end{equation}
Since $(J_{n,m}:(x_{n-m+1} \cdots x_{n-1}))^t \subset (J_{n,m}^t:(x_{n-m+1} \cdots x_{n-1})^t)$,  in order to
complete the proof, by \eqref{eco1}, it suffices to show that for any monomial $v$ with $(x_{n-m+1} \cdots x_{n-1})^t v \in J_{n,m}^t$,
there exists some monomials $v_1,\ldots,v_t\in \begin{cases} (x_{n-m},x_n),& n\leq 2m \\ I_{n-m-1,m}S+(x_{n-m},x_n),& n\geq 2m+1 \end{cases}$, such that $v_1\cdots v_t \mid v$. 

Indeed, let $v$ a monomial as above. Let $a=\deg_{x_{n-m}}(v)$ and $b=\deg_{x_n}(v)$. If $a\geq t$ then we can choose 
$v_1=\cdots=v_t=x_{n-m}$ and 
we are done. Also, if $a<t$ and $a+b\geq t$, we can choose $v_1=\cdots=v_a=x_{n-m}$, $v_{a+1}=\cdots=v_t=x_n$ and we are also done. 

Now, assume $a+b<t$. Let $v_1=\cdots=v_a=x_{n-m}$ and $v_{a+1}=\cdots=v_{a+b}=x_n$. Also, let $v'=\frac{v}{x_m^a x_n^b}$. 
Since $(x_{n-m+1} \cdots x_{n-1})^t v \in J_{n,m}^t$, there are $g_1,\ldots,g_t \in G(J_{n,m})$ such that 
$g_1\cdots g_t\mid (x_{n-m+1} \cdots x_{n-1})^t v$. It is clear that at most $a+b$ of the monomials $g_1,\ldots,g_t$ are divisible
by $x_{n-m}$ or $x_n$. Hence, there are $t-a-b$ such monomials, let's say $g_{a+b+1},\ldots,g_t$ which are not divisible neither by $x_{n-m}$,
neither by $x_n$. In particular, it follows that $$g_{a+b+1},\ldots,g_t\in K[x_1,\ldots,x_{n-m-1},x_{n-m+1},\ldots,x_{n-1}],$$ which lead to a contradiction if $n\leq 2m$. On the other hand, if $n\geq 2m+1$, then we get $g_{a+b+1},\ldots,g_t \in G(I_{n-m-1,m})$. We let $v_{a+b+1}=g_{a+b+1},\ldots,v_t=g_t$
and we are done.
\end{proof}

\begin{lema}\label{intermed}
Let $n>m\geq 2$ and $t\geq 1$ be some integers with $n\geq 2m+1$. 

Let $V:=(J_{n,m}^t:(x_{n-m+1}x_{n-m+2}\cdots x_{n-1})^t)$. Then:
$$  \sdepth(S/V) \geq \depth(S/V) = \begin{cases} \varphi(n,m,t),& t\leq n-2m \\ 2(m-1),& t\geq n-2m+1 \end{cases}.$$
\end{lema}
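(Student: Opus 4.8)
The plan is to combine the explicit description of $V$ coming from Lemma~\ref{inmt2} with the structure theorem Theorem~\ref{teo-iran}. Since $n\geq 2m+1$, Lemma~\ref{inmt2} gives
$$V=\bigl(I_{n-m-1,m}S+(x_{n-m},x_n)\bigr)^t,$$
and I would read the base ideal as a sum $I+L$ of two ideals living on disjoint sets of variables, with $L$ a complete intersection. Concretely, set $S'=K[x_1,\ldots,x_{n-m-1}]$ and $S''=K[x_{n-m},x_{n-m+1},\ldots,x_n]$, so that $S=S'\otimes_K S''$ with $p=n-m-1$, and put $I=I_{n-m-1,m}\subset S'$ and $L=(x_{n-m},x_n)\subset S''$. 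The hypothesis $n\geq 2m+1$ forces $m\leq n-m-1$, so $I$ is a genuine $m$-path ideal; moreover $L$ is generated by two variables forming a regular sequence, hence is a complete intersection, and $S''/L\cong K[x_{n-m+1},\ldots,x_{n-1}]$ gives $\dim(S''/L)=m-1$. The point of this arrangement is that the $m-1$ variables $x_{n-m+1},\ldots,x_{n-1}$ divided out in forming $V$ get absorbed into $\dim(S''/L)$, so no extra bookkeeping is needed.

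Next I would apply Theorem~\ref{teo-iran} to $V=(I+L)^t$. Part~(1) gives
$$\depth(S/V)=\min_{1\leq i\leq t}\{\depth_{S'}(S'/I^i)\}+(m-1),$$
while part~(2) combined with Theorem~\ref{depth}(1) gives
$$\sdepth(S/V)\geq\min_{1\leq i\leq t}\{\sdepth_{S'}(S'/I^i)\}+(m-1)\geq\min_{1\leq i\leq t}\{\depth_{S'}(S'/I^i)\}+(m-1)=\depth(S/V),$$
which already disposes of the inequality $\sdepth(S/V)\geq\depth(S/V)$. Since Theorem~\ref{depth}(1) identifies $\depth_{S'}(S'/I^i)=\varphi(n-m-1,m,i)$, all that remains is to compute $\mu:=\min_{1\leq i\leq t}\{\varphi(n-m-1,m,i)\}$ and add $m-1$.

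The combinatorial core is then the behaviour of $i\mapsto\varphi(n-m-1,m,i)$. For $i>n-2m$ the second branch of the definition of $\varphi$ gives the constant value $m-1$. For $i\leq n-2m$ one sits in the first branch, and I would write it as $f(j)$ with $j=(n-m+1)-i$ and $f(j)=j-\lfloor j/(m+1)\rfloor-\lceil j/(m+1)\rceil$; a short check that at each unit step at most one of the floor and the ceiling can jump (their jump points are $j\equiv 0$ and $j\equiv m\bmod(m+1)$, which never coincide) shows $f$ is non-decreasing in $j$, hence $\varphi(n-m-1,m,i)$ is non-increasing in $i$, with value $f(m+1)=m-1$ at $i=n-2m$, matching the second branch. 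Therefore, if $t\leq n-2m$ the minimum is attained at $i=t$, so $\mu=\varphi(n-m-1,m,t)$; and if $t\geq n-2m+1$ the index $i=n-2m$ lies in range, so $\mu=m-1$ and $\depth(S/V)=2(m-1)$, as claimed.

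Finally, in the range $t\leq n-2m$ I would verify $\varphi(n-m-1,m,t)+(m-1)=\varphi(n,m,t)$. Both sides are in the first branch (since $t\leq n-2m\leq n+1-m$), and writing $A=n-t+2$ and $B=A-(m+1)=n-m+1-t$ one has $\lfloor B/(m+1)\rfloor=\lfloor A/(m+1)\rfloor-1$ and $\lceil B/(m+1)\rceil=\lceil A/(m+1)\rceil-1$; substituting and checking that the leftover constant $-(m+1)+1+1+(m-1)$ vanishes yields the identity. I expect this last floor--ceiling bookkeeping, together with establishing the monotonicity of $\varphi$, to be the main obstacle; the homological input reduces cleanly to Theorem~\ref{teo-iran} and Theorem~\ref{depth}.
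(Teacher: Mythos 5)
Your proposal is correct and follows essentially the same route as the paper's own proof: decompose $V=(I_{n-m-1,m}S+(x_{n-m},x_n))^t$ via Lemma \ref{inmt2}, apply Theorem \ref{teo-iran} with $\dim(S''/L)=m-1$ together with Theorem \ref{depth} and the monotonicity of $i\mapsto\varphi(n-m-1,m,i)$, and finish with the floor/ceiling identity $\varphi(n-m-1,m,t)+(m-1)=\varphi(n,m,t)$. The only difference is that you spell out the monotonicity argument and the final bookkeeping, which the paper dismisses as ``straightforward computations.''
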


\begin{proof}
Let $S':=K[x_1,\ldots,x_{n-m-1}]$ and $S'':=K[x_{n-m},\ldots,x_n]$. We consider the ideals $I=I_{n-m-1,m}\subset S'$
and $L=(x_{n-m},x_n)\subset S''$. According to Lemma \ref{inmt2}, we have that $V=(IS+LS)^t$.
It is clear that 
\begin{equation}\label{dimi}
\dim(S''/L)=m+1-2=m-1. 
\end{equation}
Also, from Theorem \ref{depth}, we have that
\begin{equation}\label{depi}
\sdepth(S'/I^t) \geq \depth(S'/I^t) = \varphi(n-m-1,m,t).
\end{equation}
Since $V=(IS+LS)^t$, from Theorem \ref{teo-iran}, \eqref{dimi}, \eqref{depi} and the fact that $t\mapsto \varphi(n-m-1,m,t)$ is 
nonincreasing, it follows that
$$\sdepth(S/V) \geq \depth(S/V) = \varphi(n-m-1,m,t)+m-1.$$
The required formula follows by straightforward computations.
\end{proof}

The following result is an improvement of Theorem \ref{t212} for $t\leq n-2m$.

\begin{teor}\label{t3}
Let $n > m\geq 2$ and $t\geq 1$ be some integers. If $n\geq 2m+1$ then 
$$\depth(S/J_{n,m}^t) \leq \begin{cases} \varphi(n,m,t),& t\leq n-2m \\ \varphi(n-1,m,t)+1,& n-2m+1\leq t\leq n-m \\ m,& t\geq n-m+1 
\end{cases}.$$
\end{teor}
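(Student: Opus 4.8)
The plan is to bound $\depth(S/J_{n,m}^t)$ from above by relating $J_{n,m}^t$ to the colon ideal $V=(J_{n,m}^t:(x_{n-m+1}\cdots x_{n-1})^t)$ already analyzed in Lemma \ref{intermed}. By Lemma \ref{lem}(2), passing to a colon ideal can only increase depth, so $\depth(S/J_{n,m}^t)\leq\depth(S/V)$; this immediately hands me the bound $\varphi(n,m,t)$ when $t\leq n-2m$. Thus the first case of the theorem is essentially free from the machinery set up in the previous two lemmas, and this is the part the preamble advertises as the genuine improvement over Theorem \ref{t212}.

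For the remaining two ranges, $n-2m+1\leq t\leq n-m$ and $t\geq n-m+1$, I expect the colon bound $2(m-1)$ coming from Lemma \ref{intermed} to be too weak (it can exceed $\varphi(n-1,m,t)+1$ and $m$), so I would instead recover these bounds from the already-established Theorem \ref{t212}, which gives $\depth(S/J_{n,m}^t)\leq\varphi(n-1,m,t)+1$ for all $t\geq 1$. The middle range then follows directly. For the last range $t\geq n-m+1$, I would evaluate $\varphi(n-1,m,t)$ at $t>(n-1)+1-m$, where the piecewise definition of $\varphi$ gives $\varphi(n-1,m,t)=m-1$, so Theorem \ref{t212} yields $\depth(S/J_{n,m}^t)\leq(m-1)+1=m$. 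Hence each case is obtained by combining one of the two available upper bounds and then simplifying.

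The main obstacle, and the only real content, is verifying that the claimed piecewise bound is genuinely the minimum of the two sources in each regime, and in particular checking that the first case $\varphi(n,m,t)$ is actually smaller than (or at least no larger than) the bound $\varphi(n-1,m,t)+1$ from Theorem \ref{t212} on the overlap $t\leq n-2m$. I would carry this out by unfolding the definition of $\varphi(n,m,t)$: for $t\leq n+1-m$ one has
$$\varphi(n,m,t)=n-t+2-\left\lfloor\tfrac{n-t+2}{m+1}\right\rfloor-\left\lceil\tfrac{n-t+2}{m+1}\right\rceil,$$
and comparing $\varphi(n,m,t)$ with $\varphi(n-1,m,t)+1$ reduces to an elementary inequality between the floor and ceiling terms evaluated at $n-t+2$ versus $n-t+1$. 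This is a routine but slightly delicate floor/ceiling estimate, and it is where I would spend the bulk of the computation; I do not anticipate any structural difficulty beyond it.

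\begin{proof}
By Lemma \ref{lem}(2) applied to the monomial $u=(x_{n-m+1}\cdots x_{n-1})^t$, we have $\depth(S/J_{n,m}^t)\leq\depth(S/V)$, where $V=(J_{n,m}^t:u)$. When $t\leq n-2m$, Lemma \ref{intermed} gives $\depth(S/V)=\varphi(n,m,t)$, proving the first case. For $n-2m+1\leq t\leq n-m$, Theorem \ref{t212} yields $\depth(S/J_{n,m}^t)\leq\varphi(n-1,m,t)+1$, which is the second case. Finally, if $t\geq n-m+1$, then $t>(n-1)+1-m$, so by definition $\varphi(n-1,m,t)=m-1$, and Theorem \ref{t212} gives $\depth(S/J_{n,m}^t)\leq(m-1)+1=m$, proving the last case.
\end{proof}
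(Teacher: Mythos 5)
Your proof is correct and takes essentially the same route as the paper: the colon-ideal bound via Lemmas \ref{inmt2}, \ref{intermed} and \ref{lem}(2) for the range $t\leq n-2m$, and Theorem \ref{t212} (together with $\varphi(n-1,m,t)=m-1$ for $t\geq n-m+1$) for the two remaining ranges. The only difference is cosmetic: the paper additionally records the inequality $\varphi(n-1,m,t)+1\leq 2m-2$ for $t\geq n-2m+1$ to explain why the bound from Theorem \ref{t212} supersedes the $2(m-1)$ bound coming from Lemma \ref{intermed} in that range, a comparison your streamlined write-up correctly identifies as unnecessary for establishing the stated upper bound.
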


\begin{proof}
From Lemma \ref{inmt2}, Lemma \ref{intermed} and Lemma \ref{lem}(2) it follows that
\begin{equation}\label{ekku1}
\depth(S/J_{n,m}^t) \leq \begin{cases} \varphi(n,m,t),& t\leq n-2m \\ 2(m-1),& t\geq n-2m+1 \end{cases}.
\end{equation}
On the other hand, according to Theorem \ref{t212} we have that
\begin{equation}\label{ekku2}
\depth(S/J_{n,m}^t) \leq \varphi(n-1,m,t).
\end{equation}
Also, it is easy to check that
$$ \varphi(n-1,m,t)+1\leq 2m-2\text{ for all }t\geq n-2m+1\text{ and } $$
\begin{equation}\label{ekku3}
\varphi(n-1,m,t)=m-1\text{ for all }t\geq n-m+1.
\end{equation}
The required conclusion follows from \eqref{ekku1}, \eqref{ekku2} and \eqref{ekku3}. 
\end{proof}

\begin{obs}\rm
If $m<n\leq 2m$ and $t\geq 1$ then Lemma \ref{inmt2} and Lemma \ref{lem} imply
\begin{align*}
& \depth(S/J_{n,m}^t)\leq \depth(S/(x_{n-m},x_n)^t)=n-2\text{ and }\\
& \sdepth(S/J_{n,m}^t)\leq \sdepth(S/(x_{n-m},x_n)^t)=n-2.
\end{align*}
However, the above inequalities are trivial, since the ideal $J_{n,m}^t$ is not principal.
\end{obs}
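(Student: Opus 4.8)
The plan is to obtain both inequalities as a single application of the colon computation in Lemma \ref{inmt2} together with the monotonicity of $\depth$ and $\sdepth$ under colons (Lemma \ref{lem}), and then to evaluate the two invariants of $S/(x_{n-m},x_n)^t$ by hand. First I would set $u:=(x_{n-m+1}x_{n-m+2}\cdots x_{n-1})^t$. Since $m<n\le 2m$, the first branch of Lemma \ref{inmt2} gives $(J_{n,m}^t:u)=(x_{n-m},x_n)^t$, a proper ideal; in particular $u\notin J_{n,m}^t$ (alternatively, $\deg u=(m-1)t<mt$ lies below the degree of every element of $J_{n,m}^t$). Hence Lemma \ref{lem} applies to the monomial $u$ and yields
$$\sdepth(S/(x_{n-m},x_n)^t)\ge \sdepth(S/J_{n,m}^t)\quad\text{and}\quad \depth(S/(x_{n-m},x_n)^t)\ge \depth(S/J_{n,m}^t).$$
This reduces everything to computing the right-hand invariants of the power of the two-variable ideal $(x_{n-m},x_n)^t$.

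Next I would show $\depth(S/(x_{n-m},x_n)^t)=\sdepth(S/(x_{n-m},x_n)^t)=n-2$. Since $(x_{n-m},x_n)^t$ involves only the variables $x_{n-m}$ and $x_n$, I would first pass to the subring $R:=K[x_{n-m},x_n]$, where $(x_{n-m},x_n)^t$ is primary to the maximal ideal of $R$, so that $R/(x_{n-m},x_n)^t$ is Artinian. Then the maximal ideal of $R$ lies in $\Ass$, and Lemma \ref{lem7} (applied over $R$) gives $\depth_R=\sdepth_R=0$. Reintroducing the remaining $n-2$ variables $x_1,\ldots,x_{n-m-1},x_{n-m+1},\ldots,x_{n-1}$ one at a time and applying both parts of Lemma \ref{lhvz} (which depends only on the number of adjoined variables) raises each invariant by $1$ per variable, giving $0+(n-2)=n-2$. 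Substituting into the two inequalities above produces exactly the stated bounds $\depth(S/J_{n,m}^t)\le n-2$ and $\sdepth(S/J_{n,m}^t)\le n-2$.

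Finally, to justify that these bounds are trivial I would invoke the Auslander--Buchsbaum formula $\pd(S/J_{n,m}^t)+\depth(S/J_{n,m}^t)=n$: a nonzero monomial ideal is free as an $S$-module if and only if it is principal, so the non-principal ideal $J_{n,m}^t$ has $\pd(S/J_{n,m}^t)\ge 2$, whence $\depth(S/J_{n,m}^t)\le n-2$ with no reference to Lemma \ref{inmt2} at all; the Stanley depth obeys the same ceiling by the computation just carried out. There is essentially no obstacle here, the remark being a sanity check rather than a theorem; the only step meriting a line of care is the base computation $\depth_R=\sdepth_R=0$, which Lemma \ref{lem7} settles uniformly before Lemma \ref{lhvz} handles the inert variables.
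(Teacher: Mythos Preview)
Your proposal is correct and follows exactly the reasoning the paper indicates: the remark itself carries no separate proof beyond pointing to Lemma \ref{inmt2} and Lemma \ref{lem}, and you have correctly unpacked that reference by taking $u=(x_{n-m+1}\cdots x_{n-1})^t$, applying the first branch of Lemma \ref{inmt2}, invoking Lemma \ref{lem}, and then computing $\depth$ and $\sdepth$ of $S/(x_{n-m},x_n)^t$ via Lemma \ref{lem7} in two variables followed by Lemma \ref{lhvz}. Your justification of the ``triviality'' via Auslander--Buchsbaum for depth is also the intended one; the paper does not spell out a separate argument for the $\sdepth$ side, so your brief remark there is in line with the original.
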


\section{Remarks in the general case}\label{s3}

Let $n>m\geq 2$ be two integers.
In \cite{lucrare2} we studied the functions $t\mapsto \depth(S/J_{n,m}^t)$ and
$t\mapsto \sdepth(S/J_{n,m}^t)$ for $t\geq t_0$, where $t_0$ was defined in Section \ref{s2}.
However, for $2\leq t\leq t_0-1$, this problem is much harder. In the following, we 
present a possible way of tackling it and we point out the difficulties which appear.

In order of convenience, we introduce the following notations:
We let $J=J_{n,m}\subset S$, $S'=K[x_1,\ldots,x_{n-1}]$, $J'=(J:x_n)\cap S'$ and $I=I_{n-1,m}\subset S'$.

\begin{lema}\label{inmt}
With the above notations, we have that:
\begin{enumerate}
\item[(1)] $(J^t,x_n^k)=(I^{t+1-k}J^{k-1},x_n^k)$ for all $1\leq k\leq t$.
\item[(2)] $((J^t:x_n^{k-1}),x_n) = (I^{t+1-k}J'^{k-1},x_n)$ for all $1\leq k\leq t$.
\item[(3)] $(J^t,x_n)=(I^t,x_n)$ and $(J^t:x_n^t)=J'^tS$.
\item[(4)] $\depth(S/J^t)\leq \depth(S'/J'^t)+1$ and $\sdepth(S/J^t)\leq \sdepth(S'/J'^t)+1$
\end{enumerate}
\end{lema}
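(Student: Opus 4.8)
The plan is to reduce everything to a single combinatorial observation about the minimal generators of $J=J_{n,m}$ and then feed the resulting colon computation into the standard depth/Stanley-depth machinery. First I would record the structure of $G(J)$: exactly $n-m$ of its generators avoid $x_n$, and these are precisely the generators $x_1\cdots x_m,\ldots,x_{n-m}\cdots x_{n-1}$ of $I=I_{n-1,m}$; the remaining $m$ generators (the linear path $x_{n-m+1}\cdots x_n$ together with the $m-1$ wrap-around paths) each contain $x_n$ to the first power, and dividing each of them by $x_n$ yields exactly the generators of $J'$ not already in $I$. From this I extract the three inclusions that drive the whole proof: $IS\subseteq J$, $I\subseteq J'$ inside $S'$, and $x_nJ'\subseteq J$ (the last because $J'\subseteq(J:x_n)$). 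I would also note that each generator $g$ of $J$ divisible by $x_n$ factors as $g=x_nh$ with $h\in G(J')$.

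For (1) and (2) the key device is that, since every generator of $J$ is squarefree, for a product $g_1\cdots g_t$ of generators of $J$ the exponent $\deg_{x_n}(g_1\cdots g_t)$ equals the number of factors that contain $x_n$. Thus a generator of $J^t$ not divisible by $x_n^k$ has at most $k-1$ factors containing $x_n$, hence at least $t+1-k$ factors lying in $I$; grouping these gives membership in $I^{t+1-k}J^{k-1}$, which proves the nontrivial inclusion in (1), the reverse inclusion being immediate from $IS\subseteq J$. For (2) I would run the same count on $(J^t:x_n^{k-1})$: if $x_n\nmid u$ and $x_n^{k-1}u\in J^t$, a dividing product has $j\le k-1$ factors with $x_n$, which after writing those factors as $x_nh_i$ with $h_i\in J'$ shows $u$ is a multiple of an element of $J'^jI^{t-j}$. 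The technical heart is then the reorganization $J'^jI^{t-j}\subseteq I^{t+1-k}J'^{k-1}$ for $j\le k-1$, which follows by peeling off $I^{t+1-k}$ from $I^{t-j}$ and absorbing the leftover $I$-factors into $J'$ via $I\subseteq J'$; the reverse inclusion uses $x_n^{k-1}J'^{k-1}=(x_nJ')^{k-1}\subseteq J^{k-1}$ together with $I^{t+1-k}\subseteq J^{t+1-k}$.

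Statement (3) is then essentially the extreme cases of (1) and (2): the identity $(J^t,x_n)=(I^t,x_n)$ is the $k=1$ instance of (1), while for $(J^t:x_n^t)=J'^tS$ I would argue directly. The inclusion $\supseteq$ is $(x_nJ')^t\subseteq J^t$; for $\subseteq$, if $x_n^tu\in J^t$ I write $u=x_n^au'$ with $x_n\nmid u'$ and observe that a dividing product $g_1\cdots g_t$ has at most $t$ factors containing $x_n$, so $u'$ is a multiple of an element of $J'^jI^{t-j}\subseteq J'^t$, using $I\subseteq J'$ once more. Finally, (4) is a formal consequence of (3): since $J$ is generated in degree $m\ge 2$ we have $x_n^t\notin J^t$, so Lemma \ref{lem} gives $\depth(S/(J^t:x_n^t))\ge\depth(S/J^t)$ and likewise for $\sdepth$; rewriting $(J^t:x_n^t)=J'^tS$ with $S=S'[x_n]$ and applying Lemma \ref{lhvz} (which adds $1$ to both invariants) yields the two inequalities at once. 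I expect the main obstacle to be the bookkeeping in the reverse inclusions of (1) and (2)—in particular verifying the exponent reorganization $J'^jI^{t-j}\subseteq I^{t+1-k}J'^{k-1}$ and confirming that the divided generators $h_i$ genuinely lie in $J'$—rather than the passage to depth in (4), which is routine once (3) is in hand.
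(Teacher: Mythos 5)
Your proposal is correct and takes essentially the same approach as the paper: the key step is the same $x_n$-degree count on products of the squarefree generators of $J$ (at most $k-1$ factors of a product can involve $x_n$ if the product is not divisible by $x_n^k$), which gives (1)--(3), and (4) follows from Lemma \ref{lem} together with Lemma \ref{lhvz} applied to $(J^t:x_n^t)=J'^tS$. The only cosmetic difference is that you prove (2) by a direct count with the reorganization $J'^jI^{t-j}\subseteq I^{t+1-k}J'^{k-1}$, whereas the paper deduces (2) from (1) via the identity $((J^t:x_n^{k-1}),x_n)=((J^t,x_n^k):x_n^{k-1})$; both rest on the same observation.
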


\begin{proof}
(1) Since $IS\subset J$, the inclusion "$\supseteq$" is clear. In order to prove the other inclusion, 
it is enough to note that if $u\in G(J^t)$ such that $x_n^k\nmid u$, then $u\in G(I^{t+1-k}J^{k-1})$.

(2) Since $((J^t:x_n^{k-1}),x_n) = ((J^t,x_n^k):x_n^{k-1})$, the conclusion follows from (1).

(3) It is clear.

(4) From Lemma \ref{lem} and Lemma \ref{lhvz} it follows that
\begin{align*}
& \depth(S/J^t)\leq \depth(S/(J^t:x_n))\leq \cdots \leq \depth(S/(J^t:x_n^t))=\depth(S'/J'^t)+1\text{ and}\\
& \sdepth(S/J^t)\leq \sdepth(S/(J^t:x_n))\leq \cdots \leq \sdepth(S/(J^t:x_n^t))=\sdepth(S'/J'^t)+1.
\end{align*}
Hence, we get the required conclusion.
\end{proof}

With the notations from Proposition \ref{inmt}, let 
$$d_k:=\depth(S'/I^{t+1-k}J'^{k-1})\text{ and }s_k=\sdepth(S'/I^{t+1-k}J'^{k-1})\text{ for }1\leq k\leq t.$$
Note that, according to Theorem \ref{depth}, we have that $s_1\geq d_1 = \varphi(n-1,m,t)$.

\begin{prop}\label{obsy}
We have that:
\begin{enumerate}
\item[(1)] $d_k\geq \depth(S/(J^t:x_n^{k-1}))-1$ for all $1\leq k\leq t$.
\item[(2)] If $\depth(S/(J^t:x_n^{k}))>\depth(S/(J^t:x_n^{k-1}))$ then $d_k = \depth(S/(J^t:x_n^{k-1}))$.
\item[(3)] If $\sdepth(S/(J^t:x_n^{k}))>\sdepth(S/(J^t:x_n^{k-1}))$ then $s_k\leq \sdepth(S/(J^t:x_n^{k-1}))$.
\end{enumerate}
\end{prop}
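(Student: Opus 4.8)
The plan is to run the whole proposition off a single multiplication-by-$x_n$ short exact sequence, reading the depth and Stanley depth of its cokernel directly from Lemma \ref{inmt}(2) and then feeding everything into the Depth Lemma and Rauf's Lemma.

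First I would fix $1\leq k\leq t$ and abbreviate $Q_k:=(J^t:x_n^{k-1})$. Using the colon identity $(Q_k:x_n)=((J^t:x_n^{k-1}):x_n)=(J^t:x_n^k)=Q_{k+1}$, the standard exact sequence associated to the variable $x_n$ becomes
$$0 \to S/Q_{k+1} \xrightarrow{\cdot x_n} S/Q_k \to S/(Q_k,x_n) \to 0.$$
The crucial input is Lemma \ref{inmt}(2), which gives $(Q_k,x_n)=(I^{t+1-k}J'^{k-1},x_n)$. Since $I^{t+1-k}J'^{k-1}$ is an ideal of $S'=K[x_1,\ldots,x_{n-1}]$, the rightmost term is annihilated by $x_n$ and satisfies $S/(Q_k,x_n)\cong S'/(I^{t+1-k}J'^{k-1})$, whence $\depth(S/(Q_k,x_n))=d_k$ and $\sdepth(S/(Q_k,x_n))=s_k$. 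Writing $a_k:=\depth(S/Q_k)$ and $b_k:=\sdepth(S/Q_k)$, the three claims become elementary manipulations of the sequence above.

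For (1), part (3) of the Depth Lemma (Lemma \ref{l11}) gives $d_k\geq\min\{a_{k+1}-1,a_k\}$; since Lemma \ref{lem}(2) supplies the monotonicity $a_{k+1}\geq a_k$, this minimum is at least $a_k-1$, which is exactly the claim. For (2), under the hypothesis $a_{k+1}>a_k$ the same bound forces $d_k\geq a_k$, because $a_{k+1}-1\geq a_k$ makes the minimum equal to $a_k$; on the other hand, part (1) of Lemma \ref{l11} gives $a_k\geq\min\{d_k,a_{k+1}\}$, and the hypothesis rules out $a_{k+1}$ as the minimizer (it would give the contradiction $a_k\geq a_{k+1}$), so $a_k\geq d_k$, and the two inequalities combine to $d_k=a_k$. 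For (3), Rauf's Lemma (Lemma \ref{asia}) gives $b_k\geq\min\{b_{k+1},s_k\}$, and the hypothesis $b_{k+1}>b_k$ again excludes $b_{k+1}$ from being the minimizer, leaving $s_k\leq b_k$.

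The one point that requires care, and the step I would flag as the main technical obstacle, is justifying the identifications $\depth_S(S/(Q_k,x_n))=\depth_{S'}(S'/(I^{t+1-k}J'^{k-1}))=d_k$ and its Stanley-depth analogue for $s_k$: one must verify that these invariants agree whether computed over $S$ or over $S'$ for a module killed by $x_n$. For depth this is routine, since a maximal regular sequence on such a module necessarily avoids $x_n$; for Stanley depth one observes that no summand $K[Z_i]$ of a Stanley decomposition of a module annihilated by $x_n$ can contain $x_n$, so decompositions over $S$ and over $S'$ correspond and the minima defining $\sdepth$ coincide. Once this is settled, the remainder is pure bookkeeping with the three inequalities above together with the monotonicity from Lemma \ref{lem}(2).
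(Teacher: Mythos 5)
Your proof is correct and follows essentially the same route as the paper: the same short exact sequence $0 \to S/(J^t:x_n^k) \to S/(J^t:x_n^{k-1}) \to S/((J^t:x_n^{k-1}),x_n) \cong S'/I^{t+1-k}J'^{k-1} \to 0$ via Lemma \ref{inmt}(2), followed by the Depth Lemma and Rauf's Lemma, with the conclusions extracted exactly as you describe. The only difference is that you make explicit two points the paper leaves implicit, namely the monotonicity $\depth(S/(J^t:x_n^k))\geq \depth(S/(J^t:x_n^{k-1}))$ from Lemma \ref{lem}(2) needed in part (1), and the agreement of $\depth$ and $\sdepth$ computed over $S$ versus $S'$ for modules killed by $x_n$ --- both handled correctly.
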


\begin{proof}
We fix some $k$ with $1\leq k\leq t$.
We have the short exact sequence
\begin{equation}\label{siruri}
 0 \to S/(J^t:x_n^k) \to S/(J^t:x_n^{k-1}) \to S/((J^t:x_n^{k-1}),x_n) \cong S'/I^{t+1-k}J'^{k-1} \to 0,
\end{equation}
where the isomorphism is given by Lemma \ref{inmt}(2).

Thus, from \eqref{siruri}, Lemma \ref{l11} (Depth lemma) and Lemma \ref{asia} it follows that 
\begin{align*}
& \depth(S/(J^t:x_n^{k-1})) \geq \min\{\depth(S/(J^t:x_n^{k})) , d_k \}, \\
& d_k \geq \min\{\depth(S/(J^t:x_n^{k}))-1,\depth(S/(J^t:x_n^{k-1})) \}\text{ and }\\
& \sdepth(S/(J^t:x_n^{k-1})) \geq \min\{\sdepth(S/(J^t:x_n^{k})) , s_k \}.
\end{align*}
Now, we get the required conclusions (1-3).
\end{proof}

\begin{prop}\label{obsy2}
With the above notations, we have that:
\begin{enumerate}
\item[(1)] $\depth(S/(J^t,x_n^t))\geq \min\{\varphi(n-1,m,t),d_2,\ldots,d_t\}$.
\item[(2)] $\sdepth(S/(J^t,x_n^t))\geq \min\{\varphi(n-1,m,t),s_2,\ldots,s_t\}$.
\item[(3)] $\depth(S/J^t)\leq \depth(S/(J^t,x_n^t))+1$.
\item[(4)] If $\depth(S/(J^t:x_n)^t)>\depth(S/J^t)$ then $\depth(S/J^t)\geq \depth(S/(J^t,x_n^t))$ and
           similarly for the Stanley depth.
\end{enumerate}
\end{prop}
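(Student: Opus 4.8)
The plan is to study $S/(J^t,x_n^t)$ through the finite filtration given by the modules $M_k:=S/(J^t,x_n^k)$ for $1\leq k\leq t$. The module $M_t=S/(J^t,x_n^t)$ is the one we want to bound, while at the bottom $M_1=S/(J^t,x_n)=S/(I^t,x_n)\cong S'/I^t$ by Lemma \ref{inmt}(3), so that $\depth(M_1)=\varphi(n-1,m,t)$ and $\sdepth(M_1)=s_1\geq \varphi(n-1,m,t)$. For each $2\leq k\leq t$ I would consider the short exact sequence
$$0\to (J^t,x_n^{k-1})/(J^t,x_n^k)\to M_k\to M_{k-1}\to 0$$
and identify its left term. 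Multiplication by $x_n^{k-1}$ gives a surjection $S\to (J^t,x_n^{k-1})/(J^t,x_n^k)$ whose kernel is precisely $((J^t:x_n^{k-1}),x_n)$, so the left term is isomorphic to $S/((J^t:x_n^{k-1}),x_n)$. By Lemma \ref{inmt}(2) this equals $S/(I^{t+1-k}J'^{k-1},x_n)\cong S'/I^{t+1-k}J'^{k-1}$, a module annihilated by $x_n$, hence of depth $d_k$ and Stanley depth $s_k$ over $S$.

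Granting these sequences, parts (1) and (2) follow by a short induction on $k$. The Depth Lemma \ref{l11}(1) applied to each sequence yields $\depth(M_k)\geq \min\{\depth(M_{k-1}),d_k\}$, so starting from $\depth(M_1)=\varphi(n-1,m,t)$ we obtain $\depth(M_t)\geq \min\{\varphi(n-1,m,t),d_2,\ldots,d_t\}$, which is (1). Using Lemma \ref{asia} in place of the Depth Lemma gives $\sdepth(M_k)\geq \min\{\sdepth(M_{k-1}),s_k\}$, and the same induction with $\sdepth(M_1)=s_1\geq \varphi(n-1,m,t)$ produces (2).

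For parts (3) and (4) I would instead use the short exact sequence
$$0\to A\to S/J^t\to S/(J^t,x_n^t)\to 0,\qquad A:=(J^t,x_n^t)/J^t.$$
Multiplication by $x_n^t$ identifies $A\cong S/(J^t:x_n^t)$, which equals $S/J'^tS$ by Lemma \ref{inmt}(3); by Lemma \ref{lhvz} its depth is $\depth(S'/J'^t)+1$, and Lemma \ref{inmt}(4) then gives $\depth(A)\geq \depth(S/J^t)$. For (3), the Depth Lemma \ref{l11}(3) yields $\depth(S/(J^t,x_n^t))\geq \min\{\depth(A)-1,\depth(S/J^t)\}$, and since $\depth(A)-1\geq \depth(S/J^t)-1$ the right-hand side is at least $\depth(S/J^t)-1$, which rearranges to the desired inequality. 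For (4), the Depth Lemma \ref{l11}(1) gives $\depth(S/J^t)\geq \min\{\depth(A),\depth(S/(J^t,x_n^t))\}$; the hypothesis $\depth(A)=\depth(S/(J^t:x_n^t))>\depth(S/J^t)$ rules out the first term as the minimum, forcing $\depth(S/J^t)\geq \depth(S/(J^t,x_n^t))$, and the Stanley depth version follows identically from Lemma \ref{asia}. The only point requiring care is the identification of the filtration quotients through Lemma \ref{inmt}(2)--(3); once these isomorphisms are recorded, the argument is a routine application of the Depth Lemma and its Stanley-depth analogue.
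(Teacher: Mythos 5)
Your proof is correct and follows essentially the same route as the paper: the same filtration sequences $0 \to S/((J^t:x_n^{k-1}),x_n) \to S/(J^t,x_n^k) \to S/(J^t,x_n^{k-1}) \to 0$ identified via Lemma \ref{inmt}(2) for parts (1)--(2), and the same sequence $0\to S/(J^t:x_n^t)\to S/J^t\to S/(J^t,x_n^t)\to 0$ combined with the Depth Lemma and Lemma \ref{asia} for parts (3)--(4). The only difference is that you spell out the identifications of the kernel terms and cokernels explicitly, which the paper leaves implicit.
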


\begin{proof}
(1) We consider the following short exact sequences
\begin{equation}\label{siruri2}
 0 \to S/((J^t:x_n^{k-1}),x_n) \to S/(J^t,x_n^{k}) \to S/(J^t,x_n^{k-1}) \to 0\text{ for }2\leq k\leq t.
\end{equation}
Since, by Lemma \ref{inmt}, we have that $S'/I^{t+1-k}J'^{k-1}\cong S/((J^t:x_n^{k-1}),x_n)$,
the conclusion follows from \eqref{siruri2}, Lemma \ref{l11} (Depth lemma) and Theorem \ref{depth}.

(2) The proof is similar to the proof of (1), using Lemma \ref{asia} instead of Lemma \ref{l11}.

(3) It follows from Lemma \ref{l11}, Lemma \ref{lem} and 
the short exact sequence 
\begin{equation}\label{copacel}
0\to S/(J^t:x_n^t) \to S/J^t \to S/(J^t,x_n^t) \to 0.
\end{equation}
(4) It follows form \eqref{copacel}, Lemma \ref{l11} and Lemma \ref{asia}.
\end{proof}

The following examples illustrate the computational difficulties which appear, when we apply 
the previous results in order to compute $\depth(S/J^t)$ and $\sdepth(S/J^t)$.

\begin{exm}\label{exem1}\rm
Let $J=J_{6,3}^2\subset S=K[x_1,\ldots,x_6]$. Let $I=I_{5,3}^2\subset S'=K[x_1,\ldots,x_5]$.
We have $$J'=(J:x_6)\cap S'=(x_1x_2,x_2x_3x_4,x_4x_5,x_5x_1).$$
As in the proof of Proposition \ref{obsy}, we have the short exact sequences:
\begin{align}\label{shorte}
& 0 \to S/(J^2:x_6) \to S/J^2 \to S/(J^2,x_6)\cong S'/I^2 \to 0\text{ and }\\
& 0 \to S/(J^2:x_6^2) \to S/(J^2:x_6) \to S/((J^2:x_6),x_6) \cong S'/IJ' \to 0.
\end{align}
From Theorem \ref{depth} it follows that:
$$\sdepth(S'/I^2)\geq\depth(S'/I^2)=\varphi(5,3,2)=3.$$
Note that $IJ'=x_3L$, where $L=(x_1x_2,x_2x_4,x_4x_5)(x_1x_2,x_2x_3x_4,x_4x_5,x_5x_1)\subset S'$, and
$(J^2:x_6^2)=J'^2S=(x_1x_2,x_2x_3x_4,x_4x_5,x_5x_1)^2S$. From Lemma \ref{lemm}, it follows that
\begin{equation}\label{sprim}
\depth(S'/IJ')=\depth(S'/L)\text{ and }\sdepth(S'/IJ')=\sdepth(S'/L).
\end{equation}
We have $(L:x_2x_3x_4)=(x_1,x_4)\cap(x_2,x_5)$. By straightforward computations we get:
$$\sdepth(S'/(L:x_2x_3x_4))=\depth(S'/(L:x_2x_3x_4))=2.$$
Also, $W:=(L,x_2x_3x_4)=(x_2x_3x_4, x_1x_2x_4x_5, x_1^2x_2x_5, x_1^2x_2^2, x_2x_4^2x_5, x_1x_2^2x_4, x_4^2x_5^2, x_1x_4x_5^2)$.
We have that $(W:x_2x_4x_5)=(x_3,x_4,x_1)$. Therefore we get 
$$\sdepth(S'/(W:x_2x_4x_5))=\depth(S'/(W:x_2x_4x_5))=2.$$
Also, $(W,x_4x_5)=(x_2x_4x_5, x_2x_3x_4, x_1x_4x_5^2, x_4^2x_5^2, x_1x_2^2x_4, x_1^2x_2^2, x_1^2x_2x_5)$.
By continuing the computations, we can deduce that
$$\sdepth(S'/(W,x_4x_5))=\depth(S'/(W,x_4x_5))=2.$$
From the short exact sequence $0\to S'/(W:x_4x_5)\to S'/W \to S'/(W,x_4x_5)\to 0$, using Lemma \ref{l11} and Lemma \ref{asia},
we deduce that:
$$\depth(S'/W)\geq 2\text{ and }\sdepth(S'/W)\geq 2.$$
From the short exact sequence $0\to S'/(L:x_2x_3x_4)\to S'/L \to S'/W \to 0$ and \eqref{sprim}, using Lemma \ref{l11} and Lemma \ref{asia},
we deduce that: 
$$d_1:=\depth(S'/IJ')=\depth(S'/L)\geq 2\text{ and }s_1:=\sdepth(S'/IJ')=\sdepth(S'/L)\geq 2.$$
Using similar computations, as above, one can deduce that
$$d_2=\depth(S'/J'^2)\geq 2\text{ and }s_2:=\sdepth(S'/IJ')=\sdepth(S'/L)\geq 2.$$

We have $(L,x_4)=(x_1^2x_2(x_1,x_5),x_4)$ and therefore it is easy to see that
$$\sdepth(S'/(L,x_4))=\depth(S'/(L,x_4))=2.$$
Let $K:=(L:x_4)$.
We have $(K:x_3)=(x_1x_2^2,x_1x_2x_5,x_1x_5^2,x_2^2x_4,x_2x_4x_5,x_4x_5^2)$ and
$(K,x_3)=(x_3, x_1x_2^2,x_1x_2x_5,x_1x_5^2,x_2x_4x_5,x_4x_5^2)$.
By continuing the computations, we can deduce that 
$$\sdepth(S'/(K:x_3))=\depth(S'/(K:x_3))=2\text{ and }\sdepth(S'/(K,x_3))=\depth(S'/(K,x_3))=1.$$
From the short exact sequence $0\to S'/(K:x_3) \to S'/K \to S'/(K,x_3) \to 0$ it follows that
$$\depth(S'/K)\geq 1\text{ and }\sdepth(S'/K)\geq 1.$$
From the short exact sequence $0\to S'/(L:x_4)=S'/K \to S'/L \to S'/(L,x_4)\to 0$ it follows that
$$\depth(S'/IJ')=\depth(S'/L)=\depth(S'/K)\geq 1\text{ and }\sdepth(S'/IJ')=\sdepth(S'/L)\geq 1.$$
As in the proof of Proposition \ref{obsy}, we can deduce from the short exact sequences \eqref{shorte} that:
$$\depth(S/J^2)\geq 2\text{ and }\sdepth(S/J^2)\geq 2.$$
We mention that, according to Cocoa \cite{cocoa}, we have $\sdepth(S/J^2)=\depth(S/J^2)=3$.
\end{exm}

\begin{exm}\label{exem2}\rm
Let $J=J_{6,4}\subset S=K[x_1,\ldots,x_6]$. Let $I=I_{5,4}\subset S'=K[x_1,\ldots,x_5]$.
We have $$J'=(J:x_6)\cap S'=(x_1x_2x_3,x_3x_4x_5,x_4x_5x_1,x_5x_1x_2).$$
We have the short exact sequences
\begin{align}\label{shortes}
& 0 \to S/(J^2:x_6) \to S/J^2 \to S/(J^2,x_6)\cong S'/I^2 \to 0\text{ and }\\
& 0 \to S/(J^2:x_6^2) \to S/(J^2:x_6) \to S/((J^2:x_6),x_6) \cong S'/IJ' \to 0.
\end{align}
From Theorem \ref{depth} it follows that
$$\sdepth(S'/I^2)\geq\depth(S'/I^2)=\varphi(5,4,2)=3.$$
Note that $IJ'=x_2x_3x_4L$, where $L=(x_1,x_5)(x_1x_2x_3,x_3x_4x_5,x_4x_5x_1,x_5x_1x_2)\subset S'$.
From Lemma \ref{lemm}, it follows that
\begin{equation}\label{sprims}
\depth(S'/IJ')=\depth(S'/L)\text{ and }\sdepth(S'/IJ')=\sdepth(S'/L).
\end{equation}
We  have $(L:x_3)=(x_1,x_5)(x_1x_2,x_4x_5) \text{ and }(L,x_3)=x_1x_5(x_1,x_5)(x_2,x_4)+(x_3)$.
By straightforward computation we have
\begin{align*}
& \sdepth\left(\frac{K[x_1,x_2,x_4,x_5]}{(x_1,x_5)(x_2,x_4)}\right)=\depth\left(\frac{K[x_1,x_2,x_4,x_5]}{(x_1,x_5)(x_2,x_4)}\right)=1 \\
& \sdepth\left(\frac{K[x_1,x_2,x_4,x_5]}{(x_1,x_5)(x_1x_2,x_4x_5)}\right)=\depth\left(\frac{K[x_1,x_2,x_4,x_5]}{(x_1,x_5)(x_1x_2,x_4x_5)}\right)=2.
\end{align*}
Therefore, using Lemma \ref{lemm}, we obtain $\sdepth(S/(L,x_3))=\depth(S/(L,x_3))=1$. Also $\sdepth(S/(L:x_3))=\depth(S/(L:x_3))=3$.
From the short exact sequence $$0\to S/(L:x_3)\to S/L \to S/(L,x_3) \to 0$$ we deduce that $\depth(S/L),\sdepth(S/L)\geq 1.$

Hence, from \eqref{sprims}
it follows that $\depth(S'/IJ'),\sdepth(S'/IJ')\geq 1$. Using similar computations, one can deduce that $\depth(S'/J'^2),\sdepth(S'/J'^2)\geq 2$.

Therefore, from \eqref{shortes}, we deduce that $\depth(S/J^2),\sdepth(S/J^2)\geq 1$. Note that 
$$(J^2:x_1x_2x_3x_4x_5x_6)=(x_1,x_3,x_5)\cap(x_2,x_4,x_6).$$
Therefore, according to Lemma \ref{liema}, it follows that $\depth(S/(J^2:x_1x_2x_3x_4x_5x_6))=1$.
Consequently, we get $\depth(S/J^2)=1$.
\end{exm}

\subsection*{Acknowledgements}

We gratefully acknowledge the use of the computer algebra system Cocoa (cf. \cite{cocoa}) for our experiments.

The second author was supported by a grant of the Ministry of Research, Innovation and Digitization, CNCS - UEFISCDI, 
project number PN-III-P1-1.1-TE-2021-1633, within PNCDI III.

\end{document}